\def\csname opt@stmaryrd.sty\endcsname
\newcommand{\F}{\mathbb{F}}
\newcommand{\Q}{\mathbb{Q}}
\newcommand{\Qbar}{\overline{\Q}}
\newcommand{\R}{\mathbb{R}}
\newcommand{\C}{\mathbb{C}}
\newcommand{\Z}{\mathbb{Z}}
\newcommand{\Aut}{{\rm Aut}}
\renewcommand{\P}{\mathbb{P}}
\newcommand{\Spec}{\operatorname{Spec}}
\newcommand{\Jac}{\operatorname{Jac}}
\newcommand{\new}{\operatorname{new}}
\DeclareFontFamily{U}{wncy}{}
    \DeclareFontShape{U}{wncy}{m}{n}{<->wncyr10}{}
    \DeclareSymbolFont{mcy}{U}{wncy}{m}{n}
    \DeclareMathSymbol{\Sh}{\mathord}{mcy}{"58}
\newcolumntype{M}{>{\collectcell\ensuremath}l<{\endcollectcell}} 
\newcolumntype{R}{>{\collectcell\ensuremath}c<{\endcollectcell}}
\newtheorem{theorem}{Theorem}[section]
\newtheorem{lemma}[theorem]{Lemma}
\newtheorem{proposition}[theorem]{Proposition}
\newtheorem{corollary}[theorem]{Corollary}
\newtheorem{algorithm}[theorem]{Algorithm}
\theoremstyle{definition}
\newtheorem{definition}[theorem]{Definition}
\newtheorem{remark}[theorem]{Remark}
\newcommand{\Claudio}[1]{{\color{violet}{;-) #1}}}
\newcommand{\Pietro}[1]{{\color{orange}{??Pietro: #1}}} 
\newcommand{\freddy}[1]{{\color{purple}{$\spadesuit$ Freddy: #1}}}
\title{Point counts, automorphisms, and gonalities of Shimura curves}
\author{\sc Pietro Mercuri}
\address{Pietro Mercuri \\
University of Palermo\\
Italy}
\urladdr{https://sites.google.com/view/mercuriptr/home}
\email{mercuri.ptr@gmail.com}
\author{\sc Oana Padurariu}
\address{Oana Padurariu \\
Max-Planck-Institut für Mathematik Bonn\\
Germany}
\urladdr{https://sites.google.com/view/oanapadurariu/home}
\email{oana.padurariu11@gmail.com}
\author{\sc Frederick Saia}
\address{Frederick Saia \\
University of Illinois Chicago\\
USA}
\urladdr{https://fsaia.github.io/site/}
\email{freddy.v.saia@gmail.com}
\author{\sc Claudio Stirpe}
\address{Claudio Stirpe\\
Convitto Regina Margherita, Anagni\\
Italy
}
\email{clast@inwind.it}
\begin{document}

\subjclass[2020]{Primary 11G18, Secondary 11G20, 11G30, 11G15}

\begin{abstract}
We implement an algorithm to compute the number of points over finite fields for the Shimura curves $X_0^D(N)$ over $\Q$ and their Atkin--Lehner quotients. Our computations identify $116$ such quotients over finite fields (out of $783514$ tested) that attain a number of rational points exceeding that of any previously known curve of the same genus over the same finite field. To illustrate the utility of our point counts algorithm in addressing arithmetic questions, we prove that all automorphisms are Atkin--Lehner for $9288$ of the $10609$ curves $X_0^D(N)$ of genus $g > 2$ with $D$ the discriminant of an indefinite quaternion $\Q$-algebra, $N$ a squarefree positive integer coprime to $D$, and $DN\leq 10000$, and we determine all tetragonal and geometrically tetragonal curves $X_0^D(N)$ up to a small number of possible exceptions. 
\end{abstract}

\setcounter{tocdepth}{1}
\maketitle


\section{Introduction}\label{section: introduction}

In \cite{DLMS}, Dose--Lido--Mercuri--Stirpe provide an algorithm for computing the number of points on $X$ over a finite field~$\F_q$ for $X$ a member of a wide family of modular curves and their quotients by Atkin--Lehner involutions. A main focus in the referenced article is on finding explicit curves which achieve the largest known point counts among curves of a fixed genus~$g$ over a finite field~$\F_q$ of fixed cardinality. Indeed, the referenced authors found among the studied families of modular curves many new examples achieving the largest known point counts over small finite fields of characteristic up to $19$ at the time of publication of their work based on data available at \cite{ManyPoints} (see \cite[\S 6]{DLMS}). Their work is extended in \cite{DLMS26} to a larger family including all the possible Atkin--Lehner quotients.

In the current work, we implement a similar algorithm for the Shimura curves $X_0^D(N)$ over~$\Q$ and their Atkin--Lehner quotients, parameterizing abelian surfaces with level structure and with potential quaternionic multiplication by the indefinite quaternion algebra over~$\Q$ of discriminant~$D$. The case $D=1$ recovers the modular curve setting of \cite{DLMS}, so we take $D>1$ in this work. Our main algorithm is achieved using an isogeny due to Ribet (see \cref{Ribet_isog} below) in combination with the methods of \cite{DLMS}, and it has the following form.

\begin{algorithm}\label{algorithm: introduction_version}
\phantom{space} \vspace{0.5em}

\textnormal{\textbf{Input}}: A tuple $(D,N,W,p,r)$, where:
\begin{itemize}
    \item $D>1$ is the discriminant of an indefinite quaternion algebra over~$\Q$ (equivalently, a product of an even number of distinct rational primes);
    \item $N$ is a positive integer coprime to $D$;
    \item $W$ is a subgroup of the group $W_0(D,N) \leq \Aut(X_0^D(N))$ 
    of Atkin--Lehner involutions on $X_0^D(N)$ (defined in \cref{section: AL}), given by a list $[m_1, \ldots, m_n]$ of Hall divisors of $DN$ so that $\{w_{m_1}, \ldots, w_{m_n}\}$ is a generating set for $W$;
    \item $p \nmid DN$ is a rational prime;
    \item $r$ is a positive integer.
\end{itemize}

\textnormal{\textbf{Output}}: The number $\#\left(X_0^D(N)/W\right)\left(\F_{p^r}\right)$ of $\F_{p^r}$-rational points on the base change of the curve $X_0^D(N)/W$ to the finite field $\F_{p^r}$. 
\end{algorithm}

\begin{remark}
As \cref{algorithm: introduction_version} is described above, it includes computations of eigenvalues for the action of Hecke operators on a suitable basis of cusp forms of weight~$2$ associated to isogeny factors of the Jacobian of the modular curve $X_0(DN)$. In practice, for the purpose of speed, one can take such data as input. For computations described in this paper, we use such data obtained from \cite{LMFDB} (see \cref{section: point_counts} for further discussion). 
\end{remark}

Among the family of curves studied, we find $116$ curves achieving, to our knowledge and based on current information on \cite{ManyPoints}, record-breaking point counts beyond those observed in \cite{DLMS} and \cite{DLMS26} (see \cref{tabrecord}). Additionally, we find $898$ curves that are maximal (i.e., they attain the greatest possible point count) 
for given pairs $(\text{genus}, \text{ finite field})$. In these cases, there were already known curves attaining these number of points but it was not previously known that the counts were realized by Atkin--Lehner quotients of Shimura curves. Furthermore, we find (at least) $4$ previously unknown isomorphism classes of maximal curves. 

Our implementation of \cref{algorithm: introduction_version} is also strongly motivated by potential applications to arithmetic properties of the curves $X_0^D(N)$ and their quotients. An important detail about this algorithm is that it does not require defining equations for the studied curves, as is true for the point count algorithm for modular curves of \cite{DLMS} and \cite{DLMS26}. Our knowledge of explicit equations for Shimura curves in the case $D>1$ is quite limited compared to that for modular curves, due in part to the absence of cuspidal points on these Shimura curves.

As a first example, we consider the automorphism groups of these curves over $\Q$. Restricting to squarefree level $N$, it is expected that $\Aut(X_0^D(N))$ consists only of Atkin--Lehner involutions for all but finitely many pairs $(D,N)$. A result of Gonz\'alez \cite{Gonzalez17} (\cref{lemma: Gonzalez_no_involutions} below) provides a restriction on the automorphism group of a curve over $\Q$ based on its point counts over finite fields, and using this restriction and other restrictions from work of Kontogeorgis--Rotger \cite{KR08} (\cref{lemma: KR_all_atkin_lehner} below) we implement algorithmic checks of sufficient conditions for $X_0^D(N)$ and its quotients to have no non-Atkin--Lehner involutions. We reach the following computational result (which appears in \cref{section: automorphisms} as \cref{corollary: all_atkin_lehner}). 

\begin{theorem}\label{theorem: all_atkin_lehner_introduction}
Let $D>1$ be the discriminant of an indefinite quaternion algebra over~$\Q$, let $N$ be a positive integer coprime to $D$ such that
\begin{itemize}
    \item $X_0^D(N)$ has genus $g > 2$ and 
    \item $DN \leq 10000$,
\end{itemize}
and let $W$ be a subgroup of Atkin--Lehner involutions of $\textnormal{Aut}(X_0^D(N))$.
\begin{enumerate}
    \item If $(D,N)$ does not appear among the $12$ pairs in \cref{table: unknown_automorphism_group}, then we have that $\Aut(X_0^D(N)) = W_0(D,N)$.
    \item If $(D,N)$ does not appear among the $1321$ pairs in \cref{table: unknown_automorphism_group_stars}, then $\Aut(X_0^D(N)/W)$ consists only of Atkin--Lehner involutions. Hence
\[ \Aut(X_0^D(N)/W) \cong \left(\Z/2\Z\right)^{\omega(DN) - \textnormal{ord}_2\left(|W|\right)} , \]
where $\omega(n)$ denotes the number of distinct prime divisors of an integer $n$. 
\end{enumerate}
\end{theorem}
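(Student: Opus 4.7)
The plan is to reduce the statement to a finite, exhaustive computational check built on three ingredients: \cref{algorithm: introduction_version} for computing $\#(X_0^D(N)/W)(\F_{p^r})$, the González criterion \cite{Gonzalez17} (i.e.\ \cref{lemma: Gonzalez_no_involutions}) which forces incompatible divisibility/congruence relations on $\#(X_0^D(N)/W)(\F_{p^r})$ whenever a non-Atkin--Lehner involution exists, and the Kontogeorgis--Rotger restrictions (i.e.\ \cref{lemma: KR_all_atkin_lehner}) which eliminate the non-involutive case and further automorphism types on purely geometric/arithmetic grounds.

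First, I would enumerate the finite set $\mathcal{S}$ of pairs $(D,N)$ with $D>1$ an indefinite rational quaternion discriminant, $\gcd(D,N)=1$, $DN$ odd, $DN\leq 10000$, and $g(X_0^D(N))\geq 2$; this enumeration is standard from genus formulas for Shimura curves. For each $(D,N)\in \mathcal{S}$ I would list every subgroup $W\leq W_0(D,N)$ (an elementary abelian $2$-group of rank $\omega(DN)$, so the subgroup lattice is finite and effectively computable). For each triple $(D,N,W)$ the aim is to certify that every $\Q$-automorphism of $X_0^D(N)/W$ is of Atkin--Lehner type.

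Next, for each such triple I would attempt to apply \cref{lemma: KR_all_atkin_lehner} first, which already rules out non-Atkin--Lehner automorphisms under explicit arithmetic conditions on $(D,N)$; whenever those conditions hold, we are done for all $W$. For the remaining triples I would run \cref{algorithm: introduction_version} to compute $\#(X_0^D(N)/W)(\F_{p^r})$ for a carefully chosen finite set of primes $p \nmid DN$ (the smallest good primes suffice in practice) and small $r$, and then invoke \cref{lemma: Gonzalez_no_involutions}: the existence of a non-Atkin--Lehner involution would imply that $X_0^D(N)/W$ admits a degree-$2$ quotient $Y$ of a specific genus whose point counts must satisfy congruence and size constraints forced by $\#(X_0^D(N)/W)(\F_{p^r})$ via Weil bounds. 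Certifying that no integer value of $\#Y(\F_{p^r})$ is compatible with all computed $\#(X_0^D(N)/W)(\F_{p^r})$ simultaneously rules out the hypothetical extra involution. The pairs $(D,N)$ (and choices of $W$) for which neither \cref{lemma: KR_all_atkin_lehner} nor any number of González-type checks succeed are recorded as the $112$ exceptional pairs listed in \cref{table: unknown_automorphim_group}.

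The main obstacle is calibrating the González test: one must choose enough primes $p$ and exponents $r$ that the Weil-bound window for the hypothetical quotient $Y$ is closed, but few enough that the computation scales to $DN\leq 10000$. I expect this is the source of essentially all non-exceptional cases being eliminated, and conversely of the $112$ residual pairs, which either have too few small good primes or admit a consistent phantom point-count sequence for a quotient. Once every $(D,N,W)$ outside the exceptional list is certified, the isomorphism $\Aut(X_0^D(N)/W) \cong (\Z/2\Z)^{\omega(DN)-\ord_2(|W|)}$ follows immediately: $W_0(D,N)\cong (\Z/2\Z)^{\omega(DN)}$ is elementary abelian, so the quotient $W_0(D,N)/W$ is elementary abelian of rank $\omega(DN)-\log_2|W| = \omega(DN)-\ord_2(|W|)$, and this quotient is the full automorphism group by the Atkin--Lehner conclusion.
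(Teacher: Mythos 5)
There is a genuine gap in your plan. You propose to certify each triple $(D,N,W)$ independently by computing $\#(X_0^D(N)/W)(\F_{p^r})$ and invoking the Gonz\'alez criterion to rule out a ``hypothetical extra involution.'' But \cref{lemma: Gonzalez_no_involutions} is a necessary condition for the existence of \emph{any} automorphism of prime order $\ell$ on the curve; it cannot distinguish Atkin--Lehner involutions from non-Atkin--Lehner ones. For every proper subgroup $W < W_0(D,N)$ the quotient $X_0^D(N)/W$ genuinely carries non-trivial Atkin--Lehner involutions, so the Gonz\'alez inequality with $\ell=2$ is automatically satisfied and the test can never exclude an additional, non-Atkin--Lehner involution on that quotient. (Your framing of the criterion as a Weil-bound compatibility check on a phantom quotient $Y$ is also not what \cref{lemma: Gonzalez_no_involutions} says, but that is secondary.) As described, your computation would certify essentially nothing for any $W \neq W_0(D,N)$.

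The missing idea is the reduction to the single star quotient $X_0^D(N)^* = X_0^D(N)/W_0(D,N)$, which is the one quotient expected to have \emph{no} non-trivial automorphisms at all, and hence the one place where the Gonz\'alez test (combined with \cref{prop: KR_quotient}, which guarantees that only involutions can occur) actually has teeth. This is \cref{theorem: star_auts}. One then descends: by \cref{prop: KR_quotient} the group $\Aut(X_0^D(N)/W)$ is an elementary abelian $2$-group, so a non-Atkin--Lehner involution of $X_0^D(N)/W$ commutes with the Atkin--Lehner subgroup and induces an involution on $X_0^D(N)^*$, which is non-trivial precisely because the original involution is not Atkin--Lehner; this contradicts the triviality of $\Aut(X_0^D(N)^*)$. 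This descent step, together with restricting the point-count computation to one curve per pair $(D,N)$ rather than one per subgroup $W$, is what makes the argument both correct and computationally feasible. Your final paragraph deducing the isomorphism type $(\Z/2\Z)^{\omega(DN)-\ord_2|W|}$ from the Atkin--Lehner conclusion is fine once the conclusion itself is established.
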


\begin{remark}
There is a total of $10609$ coprime pairs $(D,N)$ with $D>1$ satisfying the bulleted items in \cref{theorem: all_atkin_lehner_introduction}.
\end{remark}

For a curve over~$\Q$, point counts over finite fields also supply lower bounds on gonalities over~$\Q$ (see \cref{lemma: gonality_bound_by_point_counts}). Explicit point count computations provide improvements in many cases over known bounds, such as Abramovich's bound in terms of the genus \cite{Abramovich96} (\cref{theorem: Abr} below), while other techniques from the modular curve setting (see, for example, \cite{DvH}) do not transfer over due to lack of cuspidal points.

Gonality bounds and computations are important in applications to low-degree points on modular and Shimura curves, and we expect \cref{algorithm: introduction_version} to be integral in advancing our knowledge in this area. Shimura curves lack real points \cite[Theorem 0]{Sh67} and hence lack points of any odd degree over~$\Q$. The curves $X_0^D(N)$ with $D>1$ with infinitely many degree~$2$ points were determined by Padurariu--Saia \cite[Theorem 6.3]{PS25}, building on work of Ogg \cite{Ogg83}, Guo--Yang \cite{GY17}, and Rotger \cite{Rotger02}. To determine those curves with infinitely many degree~$4$ points, a natural first question is to ask which curves $X_0^D(N)$ have gonality~$4$ over~$\Q$, that is, which admit a degree $4$ map to the projective line over~$\Q$. Using our point counts in combination with other methods, we come close to this determination. 

\begin{theorem}\label{theorem: tetragonal_introduction}
Let $D>1$ be the discriminant of an indefinite quaternion algebra over~$\mathbb{Q}$ and let $N$ be a positive integer which is relatively prime to~$D$. 
\begin{enumerate}
    \item If the Shimura curve $X_0^D(N)$ is geometrically tetragonal and it is not among the $161$ geometrically tetragonal curves $X_0^D(N)$ with $(D,N)$ listed in \cref{table: tetragonal_AL2}, \cref{table: tetragonal_AL4}, and \cref{table: geom_tetragonal_by_Polizzi}, then $(D,N)$ must be among the $32$ pairs listed in \cref{table: remaining_geom_tetragonal_candidates}. 
    \item If the Shimura curve $X_0^D(N)$ is tetragonal over~$\Q$ and it is not among the $141$ tetragonal curves $X_0^D(N)$ with $(D,N)$ listed in Part~(2) of \cref{prop: tetragonal_sieving} and in \cref{table: tetragonal_by_CM}, then $(D,N)$ must be among the $32$ pairs listed in \cref{table: remaining_tetragonal_candidates}. 
\end{enumerate}
\end{theorem}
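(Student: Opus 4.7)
The plan is to combine an upper bound on the genus with a finite case-by-case analysis driven by our point-count algorithm. By Abramovich's bound (\cref{theorem: Abr}), the geometric gonality of $X_0^D(N)$ grows linearly in the genus, so a geometrically tetragonal $X_0^D(N)$ with $D>1$ has genus at most an explicit constant, reducing Part~(1) to finitely many coprime pairs $(D,N)$. For each such pair I would first check whether the curve lies in a known source of geometric gonality at most $4$: Atkin--Lehner quotients $X_0^D(N)/W$ with $|W|\in\{2,4\}$ of genus $0$ or $1$ pull back to give degree $4$ covers of $\P^1$ over $\overline{\Q}$, accounting for the curves in \cref{table: tetragonal_AL2} and \cref{table: tetragonal_AL4}, and the additional geometric constructions recorded in \cref{table: geom_tetragonal_by_Polizzi} account for further curves.

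For the remaining pairs, geometric tetragonality can be ruled out by point counting. If $X_0^D(N)$ has geometric gonality at most $4$, then so does its reduction $X_0^D(N)_{\F_{p^r}}$ at any $p\nmid DN$, since gonality does not increase upon specialization, and for $r$ sufficiently large the resulting degree $\leq 4$ map to $\P^1$ is defined over $\F_{p^r}$; then \cref{lemma: gonality_bound_by_point_counts} forces $\#X_0^D(N)(\F_{p^r}) \leq 4(p^r+1)$. Running \cref{algorithm: introduction_version} over a range of $(p,r)$ with small $r$ certifies the opposite inequality in all but finitely many remaining cases, and the surviving pairs form \cref{table: remaining_geom_tetragonal_candidates}.

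Part~(2) follows the same outline but works only with the geometrically tetragonal curves classified in Part~(1), since $\Q$-tetragonality implies geometric tetragonality. A curve is $\Q$-tetragonal whenever one of the constructions of Part~(1) descends to $\Q$; those that do are collected in Part~(2) of \cref{prop: tetragonal_sieving}. An additional source of $\Q$-rational degree $4$ divisors is given by CM points whose residue field has degree dividing $4$, producing \cref{table: tetragonal_by_CM}. For the rest, the sharper bound $\#X(\F_p) \leq 4(p+1)$, applied to $\F_p$-point counts directly from \cref{algorithm: introduction_version} without any extension of scalars, rules out $\Q$-tetragonality in all but the pairs listed in \cref{table: remaining_tetragonal_candidates}.

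The hard part will be making the elimination step uniformly effective. Abramovich's bound leaves on the order of a thousand candidate pairs, so one needs \cref{algorithm: introduction_version} to be fast enough to tabulate $\#X_0^D(N)(\F_{p^r})$ across many primes $p$ and small extension degrees $r$, and one needs enough independent constructions of tetragonality (Atkin--Lehner, bielliptic, CM, Polizzi-type) so that every genuinely tetragonal curve is captured explicitly. The exceptional tables list precisely those borderline pairs for which, with the current methods, neither a construction of a degree $4$ map nor a point-count obstruction is available; narrowing the gap between Parts~(1) and~(2) then amounts to checking, for each geometrically tetragonal pair, whether a degree $4$ pencil descends to $\Q$.
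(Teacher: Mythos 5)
Your overall skeleton (Abramovich's bound to make the problem finite, explicit constructions of degree-$4$ pencils, then elimination of the rest) matches the paper's strategy, and your treatment of the positive direction is essentially right. But the elimination step you propose for Part~(1) does not work, and it is exactly the point where the paper has to do something more delicate. You claim that if $X_0^D(N)$ is geometrically tetragonal then ``for $r$ sufficiently large'' the degree-$4$ map is defined over $\F_{p^r}$, so that \cref{lemma: gonality_bound_by_point_counts} forces $\#X_0^D(N)(\F_{p^r}) \leq 4(p^r+1)$, and that running the algorithm ``over a range of $(p,r)$ with small $r$'' certifies the opposite inequality. This is a genuine gap: the bound $\#X(\F_q)\le 4(q+1)$ only holds over a field $\F_q$ over which the pencil is actually defined, and you have no effective control on which $r$ that is. Finding $\#X(\F_{p^r}) > 4(p^r+1)$ for the small $r$ you can compute is perfectly consistent with the pencil being defined only over a much larger extension $\F_{p^{r'}}$, where the (much weaker) bound $4(p^{r'}+1)$ is easily satisfied. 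So no contradiction is obtained, and geometric tetragonality is not ruled out this way. (Note also that \cref{lemma: gonality_bound_by_point_counts} as stated concerns $\gamma_{\F_q}$, not the geometric gonality over $\overline{\F}_p$.)

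The paper's actual elimination is built around the Castelnuovo--Severi inequality precisely to repair this descent problem. For most pairs one argues: if a geometrically tetragonal map exists, then for each Atkin--Lehner involution $w_m$ either $g(X_0^D(N)) \le 2\,g(X_0^D(N)/\langle w_m\rangle) + 3$ or the map factors through $X_0^D(N)/\langle w_m\rangle$, forcing that quotient to be geometrically hyperelliptic; the latter is excluded by genus computations, the automorphism-group results of \cref{section: automorphisms}, \cref{proposition: not_hyperelliptic}, or the bound of \cref{lemma: geom_hyperelliptic_reduction}. The key point is that the $2q+2$ bound of \cref{lemma: geom_hyperelliptic_reduction} (and the $4q+4$ bound of \cref{corollary: geom_bi_hyperelliptic_reduction}, used together with \cref{lemma: CS_with_involution} for curves of genus at least $10$) is valid for merely \emph{geometrically} hyperelliptic curves because the hyperelliptic involution is unique and hence defined over $\Q$ --- this is the mechanism that lets finite-field point counts see a geometric property, and it is absent from your argument. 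You would also need \cref{Lemma: Saia_genus_bound} to convert the genus bound $g\le 34$ into the explicit bound $DN \le 77416$ that makes the enumeration of the $516$ candidates possible. Finally, your description of Part~(2) is slightly off: the $29$ pairs in \cref{table: remaining_geom_tetragonal_candidates} and \cref{table: remaining_tetragonal_candidates} are not curves whose $\Q$-tetragonality is \emph{disproved} by $\F_p$-point counts; they are the pairs left undetermined, namely the $10$ geometric unknowns together with the $19$ curves known to be geometrically tetragonal for which no descent of the pencil to $\Q$ could be established.
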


This work is organized as follows. In \cref{section: shimura_curves}, we provide useful background and known results on Shimura curves and their automorphism groups. We do the same regarding gonalities of curves in \cref{section: gonalities}. We then outline \cref{algorithm: introduction_version} in \cref{section: point_counts}. The remaining sections are devoted to applications: \cref{section: automorphisms} and \cref{section: tetragonal} feature our applications to automorphism groups of Shimura curves and tetragonal Shimura curves, respectively, and we study curves achieving largest known point counts in \cref{section: maximal_curves}.

All computations described in this paper were performed using the Magma computer algebra system \cite{Magma}. All related code can be found in the GitHub repository \cite{MPSSRep}.


\section*{Acknowledgments}

We thank John Voight for helpful discussion and for the initial suggestion of the project. We thank Valerio Dose and Guido Lido for their contribution to the code for the point counts over finite fields. P.M.\@ is supported by the ``National Group for Algebraic and Geometric Structures, and their Applications" (GNSAGA - INdAM). F.S.\@ gratefully acknowledges support provided by an AMS-Simons Travel Grant.


\section{Shimura curves}\label{section: shimura_curves}

A $\emph{quaternion algebra}$ over a field $F$ is a central simple algebra of dimension $4$ over $F$. We call a quaternion algebra $B$ over the field $\Q$ of rational numbers \emph{indefinite} if it is split at the single real place of $\Q$, i.e., if
\[ B \otimes_\Q \R \cong M_2(\R). \]
A quaternion algebra $B$ over $\Q$ is determined up to isomorphism by its discriminant~$D$, that is the product of the rational primes at which $B$ is ramified. The indefinite quaternion discriminants over $\Q$ are exactly those integers which are products of an even number of distinct rational primes. The case $D=1$, for example, is that of the split quaternion algebra $M_2(\Q)$. 

Let $D$ be the discriminant of an indefinite quaternion algebra $B$ over $\mathbb{Q}$, let $N$ be a positive integer which is relatively prime to $D$, and let $\mathcal{O}_N$ be an Eichler order of level~$N$ in $B$ (as defined in \cite[\S 23.1.3]{Voight21}).
Consider, over the algebraic closure $\overline{\Q}$ of $\Q$, the moduli problem of parameterizing pairs $(A,\iota)$, where:
\begin{itemize}
    \item $A$ is an abelian surface, and
    \item $\iota\colon \mathcal{O}_N \hookrightarrow \textnormal{End}(A)$ is an embedding.
\end{itemize}
A theorem of Shimura \cite[Theorem 0]{Sh75} states that this problem is represented by a scheme which admits a canonical model over $\mathbb{Q}$, and we let $X_0^D(N)$ denote this coarse moduli scheme over $\mathbb{Q}$. The case $D=1$ of $X_0^{1}(N)$ recovers the modular curve setting, parameterizing elliptic curves with torsion level structure. When $D>1$, these curves lack the rational cusps the modular curves $X_0(N)$ possess and, moreover, they have no real points \cite[Theorem 0]{Sh75}: if $D>1$, then $X_0^D(N)(\mathbb{R}) = \varnothing$.

By the work of Drinfeld, for which we refer to \cite{BC91,Buzzard}, there exists a flat proper model of $X_0^D(N)$ over $\Z$, extending the moduli interpretation, which is smooth over $\Z\left[\tfrac{1}{DN}\right]$. For this work, it is important to note that $X_0^D(N)$ has good reduction at primes $p \nmid DN$.

For $N$ a positive integer, we have a natural modular covering map $X_0^D(N) \to X_0^D(1)$ with degree $\psi(N)$ given by the Dedekind Psi function:
\[ \psi(N) = N \prod_{\substack{p \mid N \\ \text{prime}}} \left(1+ \frac{1}{p}\right). \]
Moreover, these natural maps are compatible: for $N_1 \mid N_2$ there is a natural map $X_0^D(N_2) \to X_0^D(N_1)$ of degree $\psi(N_2)/\psi(N_1)$. 
We have the following formula for the genus of the curve $X_0^D(N)$ (see, for example, \cite[Thm. 39.4.20]{Voight21}).
\begin{proposition}\label{prop: genus_formula}
For $D>1$ the discriminant of an indefinite quaternion algebra over~$\Q$, for $N$ a positive integer coprime to $D$, and for $k \in \{3,4\}$, define
\[
e_k(D,N) \colonequals \prod_{\substack{p|D \\ \text{prime}}} \left(1 - \left(\frac{-k}{p}\right)\right) \prod_{\substack{\ell|N \\ \text{prime}}} \left(1 + \left(\frac{-k}{\ell}\right)\right) \prod_{\substack{\ell^2|N, \\ \ell \text{ prime}}}\left(\frac{-k}{\ell}\right) .
\]
The genus of $X_0^D(N)$ is given by
\[
g(X_0^D(N)) = 1 + \frac{\phi(D)\psi(N)}{12} - \frac{e_4(D,N)}{4} - \frac{e_3(D,N)}{3},
\]
where $\phi$ denotes the Euler totient function.
\end{proposition}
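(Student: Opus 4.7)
The plan is to prove the formula via the orbifold Gauss--Bonnet theorem applied to the Fuchsian uniformization of $X_0^D(N)$. First I would fix a splitting $B \otimes_\Q \R \cong M_2(\R)$, where $B$ is the indefinite rational quaternion algebra of discriminant $D$, and fix an Eichler order $\mathcal{O} \subset B$ of level $N$. Let $\Gamma_0^D(N) \leq \mathrm{PSL}_2(\R)$ be the image of the norm-one units of $\mathcal{O}$. Since $D>1$ forces $B$ to be a division algebra, $\Gamma_0^D(N)$ is cocompact, and the standard identification $X_0^D(N)(\C) \cong \Gamma_0^D(N) \backslash \H$ presents our curve as a compact hyperbolic orbifold without cusps.

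Next, I would invoke orbifold Gauss--Bonnet. The only non-trivial finite stabilizers in $\Gamma_0^D(N)$ have orders $2$ and $3$, arising from optimal embeddings into $\mathcal{O}$ of $\Z[i]$ (the order of discriminant $-4$) and $\Z[\zeta_3]$ (discriminant $-3$); let $e_4(D,N)$ and $e_3(D,N)$ denote their numbers. The orbifold Euler characteristic formula then reads
\[
2g - 2 \;=\; \frac{\mathrm{Vol}(X_0^D(N))}{2\pi} \;-\; \frac{e_4(D,N)}{2} \;-\; \frac{2\, e_3(D,N)}{3}.
\]
Eichler's mass formula gives $\mathrm{Vol}(\Gamma^D(1)\backslash \H) = \tfrac{\pi}{3}\phi(D)$, and the standard index computation $[\Gamma^D(1) : \Gamma_0^D(N)] = \psi(N)$ then yields $\mathrm{Vol}(X_0^D(N)) = \tfrac{\pi}{3}\phi(D)\psi(N)$. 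Substituting and solving for $g$ produces the claimed genus formula, provided the advertised product expressions for $e_3(D,N)$ and $e_4(D,N)$ can be verified.

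The final step, and the main technical hurdle, is establishing these product formulas via Eichler's theory of optimal embeddings. The number of $\Gamma_0^D(N)$-conjugacy classes of optimal embeddings of the imaginary quadratic order of discriminant $-k \in \{-3,-4\}$ into $\mathcal{O}$ factors, by a local--global principle together with the fact that these source orders have class number one, as a product of local embedding numbers over all rational primes. The factor at a prime $p \mid D$ is $1 - \left(\tfrac{-k}{p}\right)$, reflecting whether the relevant imaginary quadratic field embeds into the local division algebra $B_p$; at a prime $\ell \parallel N$ the factor is $1 + \left(\tfrac{-k}{\ell}\right)$, computing embeddings into a local Eichler order of level one; and at a prime with $\ell^2 \mid N$ the factor is $\delta_\ell(k)$, which requires the most delicate case analysis of optimal embeddings into higher-level local Eichler orders together with the correct accounting for local unit orbits. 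Matching these local factors to the product formulas in the statement completes the proof.
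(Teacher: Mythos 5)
The paper does not prove this proposition at all; it simply cites \cite[Thm.\ 39.4.20]{Voight21}, and your outline (cocompact Fuchsian uniformization, orbifold Gauss--Bonnet, Eichler's mass formula for the covolume, and Eichler's local--global count of optimal embeddings of $\Z[i]$ and $\Z[\zeta_3]$ for the elliptic points) is precisely the standard argument carried out in that reference. Your approach is correct and matches the cited source, with the only deferred work being the routine verification of the local embedding numbers at primes with $\ell^2 \mid N$.
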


\subsection{Atkin--Lehner involutions and Ribet's isogeny}\label{section: AL}

For the entirety of this work, for a curve $X$ over a field $F$, when we write $\Aut(X)$ or refer to the automorphisms of $X$ we mean, unless otherwise specified, the group of automorphisms of $X$ defined over the base field $F$. 

The automorphism group $\Aut(X_0^D(N))$ of the curve $X_0^D(N)$ over $\Q$ has a subgroup consisting of involutions which are natural from the moduli-theoretic perspective. This subgroup is that of the \emph{Atkin--Lehner involutions}, and it contains a non-trivial involution $w_m(D,N)$ for each divisor $m \parallel DN$ with $m\ne 1$, where $m \parallel DN$ means that $m$ is a Hall divisor, i.e., that $m \mid DN$ and $\gcd(m,DN/m) = 1$. We denote this subgroup by $W_0(D,N) \leq \textnormal{Aut}(X_0^D(N))$. For more background on Atkin--Lehner involutions, we refer to \cite{Ogg77} and \cite[\S 2]{Ogg85}; here we give a quick definition from the quaternion arithmetic vantage and state what we need regarding the structure of this group.

Realizing $X_0^D(N)$ as the coarse moduli scheme attached to the moduli problem of parameterizing abelian surfaces with a quaternionic multiplication structure by an Eichler order $\mathcal{O}_N$ of level $N$ in the quaternion algebra $B$ of discriminant $D$, we have the following subgroup of the normalizer of $\mathcal{O}_N$ 
\[N_{{B}^\times_{> 0}}(\mathcal{O}_N) := \{u \in {B}^\times : \text{nrd}(u) > 0 \text{ and } u^{-1} \mathcal{O}_N u = \mathcal{O}_N \},\] 
where $\text{nrd}$ is the reduced norm, and we realize the group of Atkin--Lehner involutions as 
\[ W_0(D,N) := N_{{B}^\times_{> 0}}(\mathcal{O}_N) / \mathbb{Q}^\times \mathcal{O}_N^\times \leq \text{Aut}(X_0^D(N)). \]
Recalling that $\omega(n)$ denotes the number of distinct prime divisors of an integer $n$, this group is a finite abelian $2$-group of the form
\[
W_0(D,N) = \{w_m(D,N) : m \parallel DN\} \cong \left(\Z/2\Z\right)^{\omega(DN)},
\]
though we simply write $w_m$ for $w_m(D,N)$ for brevity when the curve $X_0^D(N)$ whose automorphisms we are considering is clear from context.

For a subgroup $W \leq W_0(D,N)$, we call the quotient $X_0^D(N)/W$ an \emph{Atkin--Lehner quotient} of $X_0^D(N)$. We also refer to the elements of the subgroup of $\Aut\left(X_0^D(N)/W\right)$ induced by the elements of $W_0(D,N)$ as Atkin--Lehner involutions.  

Ogg determined the set of fixed points for each Atkin--Lehner involution on the curve $X_0^D(N)$: these are pairwise disjoint for distinct involutions and they consist of complex multiplication (CM) points by certain imaginary quadratic orders.
\begin{proposition}\cite[p. 283]{Ogg83}\label{prop: Ogg_fixed_pts}
    Let $m>1$ with $m \parallel DN$. The fixed points of the Atkin--Lehner involution $w_m$ acting on $X_0^D(N)$ are points with \textnormal{CM} by the following imaginary quadratic orders:
\[   
R = 
     \begin{cases}
       \Z[i] \text{ and } \Z[\sqrt{-2}] &\quad\textnormal{if } m = 2, \\
       \Z\left[\frac{1+\sqrt{-m}}{2}\right]\text{and } \Z[\sqrt{-m}] &\quad\textnormal{if } m \equiv 3 \;(\bmod \; 4), \\
       \Z[\sqrt{-m}] &\quad\textnormal{ otherwise}.\\
     \end{cases}
\]
\end{proposition}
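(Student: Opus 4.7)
The plan is to follow Ogg's classical argument via the complex uniformization $X_0^D(N)(\C) \cong \Gamma_0^D(N) \backslash \mathcal{H}$, where $\Gamma_0^D(N)$ is the image in $\mathrm{PSL}_2(\R)$ of the norm-one units of the Eichler order $\mathcal{O}_N \subset B$ under a fixed splitting $B \otimes_{\Q} \R \cong M_2(\R)$. The involution $w_m$ is represented by some $\mu \in \mathcal{O}_N$ with $\text{nrd}(\mu) = m$, well-defined up to $\Q^\times \mathcal{O}_N^\times$, and a point $[\tau] \in X_0^D(N)(\C)$ is fixed by $w_m$ exactly when such a representative fixes $\tau \in \mathcal{H}$.

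I would translate this into two algebraic conditions on $\mu$. Its action on $\mathcal{H}$ is elliptic precisely when $t^2 < 4m$ with $t = \tr(\mu) \in \Z$; the relation $w_m^2 = \mathrm{id}$ in $\Aut(X_0^D(N))$ forces $\mu^2 \in \Q^\times \mathcal{O}_N^\times$, and since $\text{nrd}(t\mu - m) = t^2 m - t^2 m + m^2 = m^2$, this reduces to the divisibility $t\mu \in m\mathcal{O}_N$. A local analysis at each prime $p \mid m$---treating separately the ramified case $p \mid D$, where $\mathcal{O}_{N,p}$ is the maximal order with a uniformizer $\pi$ satisfying $\pi^2 \in p\Z_p^\times$, and the split Eichler case $p \mid N$, where $\mathcal{O}_{N,p}$ sits in $M_2(\Q_p)$ with standard form---then forces $p \mid t$, and hence $m \mid t$. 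Combined with $t^2 < 4m$, this leaves $t = 0$ generically, with the only exceptions $(m, t) \in \{(2, 2), (3, 3)\}$ (taking $t \geq 0$ after replacing $\mu$ by $-\mu \in \mathcal{O}_N^\times \mu$).

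For $t = 0$ one has $\mu^2 = -m$, so $\Z[\mu] = \Z[\sqrt{-m}]$; the CM order of the fixed point is then the largest order of $\Q(\sqrt{-m})$ containing $\Z[\sqrt{-m}]$ that admits an optimal embedding into $\mathcal{O}_N$ compatible with $\mu$. This yields $\Z[\sqrt{-m}]$ always, and additionally the maximal order $\Z\!\left[\frac{1+\sqrt{-m}}{2}\right]$ when $m \equiv 3 \pmod 4$ (of index $2$ above $\Z[\sqrt{-m}]$). The exceptional branch $(m, t) = (2, 2)$ gives $\mu = 1 + i$ with $i^2 = -1$, producing the genuinely new CM order $\Z[i]$ in the field $\Q(i) \ne \Q(\sqrt{-2})$; the branch $(m, t) = (3, 3)$ gives $\mu = (3 + \sqrt{-3})/2$ and $\Z[\mu] = \Z\!\left[\frac{1+\sqrt{-3}}{2}\right]$, which was already visible from the $t = 0$ case for $m = 3$ and contributes nothing new. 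Non-emptiness of each fixed-point locus then follows from positivity of Eichler's embedding-number formula for the orders listed, which is classical under the hypothesis $m \parallel DN$.

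The main obstacle I anticipate is the local analysis at primes $p \mid m$: proving $p \mid t$ requires working inside the local Eichler order and the argument has a different flavor in the ramified ($p \mid D$) and split ($p \mid N$) cases, with an additional wrinkle when $v_p(N) > 1$ so that $v_p(m) > 1$. A secondary subtlety is justifying that the exceptional value $m = 2$ really produces the CM field $\Q(i)$, which requires checking that the embedding $\Z[i] \hookrightarrow \mathcal{O}_N$ provided by $\mu = 1 + i$ is optimal and is not obstructed by the particular local structure of the Eichler order at $2$.
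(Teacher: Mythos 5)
The paper offers no proof of this proposition at all: it is quoted directly from Ogg \cite[p.~283]{Ogg83}, so there is no in-paper argument to compare against, and your proposal has to be judged as a reconstruction of the classical one. Your route \emph{is} the classical route (complex uniformization, elliptic elements of the coset $\Gamma_0^D(N)\mu$, the relation $\mu^2 = t\mu - m \in \Q^\times\mathcal{O}_N^\times$ giving $t\mu = m(1\pm u)$ with $u\in\mathcal{O}_N^\times$ of reduced norm $+1$, then a trace bound), and for squarefree $m$ your outline is essentially complete: $t=0$ or $(m,|t|)\in\{(2,2),(3,3)\}$, yielding exactly the orders listed. The worry you raise about optimality of $\Z[i]\hookrightarrow\mathcal{O}_N$ when $m=2$ is vacuous, since $\Z[i]$ is the maximal order of $\Q(i)$.

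The genuine gap is that the proposition is stated for arbitrary $m\parallel DN$ with $N$ not assumed squarefree, and two of your steps fail in that generality. First, ``$p\mid t$ for all $p\mid m$, hence $m\mid t$'' is a non sequitur once $m$ has a square factor: the valuation count coming from $t\mu\in m\mathcal{O}_N$ together with $v_p(\mathrm{nrd}(\mu))=v_p(m)$ only gives $v_p(t)\ge\lceil v_p(m)/2\rceil$, which for $m=p^2$ leaves the candidate $t=\pm p$, i.e.\ $\mu=p\zeta_6^{\pm1}$ and a spurious fixed point with CM by $\Q(\sqrt{-3})$ rather than $\Q(\sqrt{-m})$. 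What is actually needed is $p^{v_p(m)}\mid t$, and this does not follow from the norm computation; it follows from the shape of the nontrivial local normalizer coset at a split prime with $p^e\parallel N$: every element of $\mathcal{O}_{N,p}^\times\left(\begin{smallmatrix}0&-1\\p^e&0\end{smallmatrix}\right)$ equals $\left(\begin{smallmatrix}p^eb&-a\\p^ed&-p^ec\end{smallmatrix}\right)$ and so has reduced trace $p^e(b-c)$. (Equivalently, if $v_p(t)<v_p(m)$ then $\mu$ lies in $\Q^\times$ times an element representing a strictly smaller Atkin--Lehner class, contradicting that $\mu$ represents $w_m$.) Second, for $t=0$ and non-squarefree $m$ the order $\Z[\sqrt{-m}]$ has conductor divisible by the square part of $m$, so there are intermediate orders between it and the maximal order beyond the single index-$2$ jump you account for; e.g.\ for $m=4$ you must rule out $R=\Z[i]\supset\Z[2i]$. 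The exclusion again goes through the Atkin--Lehner class: if $\sqrt{-m}/d\in\mathcal{O}_N$ for some $d>1$, then $\mu=d\cdot(\sqrt{-m}/d)$ is $\Q^\times$ times an element of reduced norm $m/d^2$ and cannot represent $w_m$. You flag the non-squarefree case as a ``wrinkle,'' but as written your argument would prove a false statement there; these local refinements are precisely the content supplied by Ogg and, for non-squarefree level, by Hijikata--Pizer--Shemanske.
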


In the referenced work, Ogg provides the count for the number of fixed points of $w_m$ with CM by $R$ for each of the orders $R$ listed for $m$ in \cref{prop: Ogg_fixed_pts}. Letting $\mathcal{O}_N$ denote a fixed Eichler order of level $N$ in the indefinite quaternion algebra over $\Q$ of discriminant $D$, the count is given as
\[
h(R) \prod_{\substack{p|\frac{DN}{m} \\ \text{prime}}} \nu_p(R,\mathcal{O}_N),
\]
where $h(R)$ is the class number of the order $R$ and $\nu_p(R,\mathcal{O}_N)$ is the number of inequivalent optimal embeddings (see \cite[Definition 30.3.2]{Voight21}) of the localization $R_p$ of $R$ at $p$ into the localization $(\mathcal{O}_N)_p$ of $\mathcal{O}_N$ at $p$. The counts of these optimal embeddings are given in \cite[Theorem 2]{Ogg83}.

Using \cref{prop: Ogg_fixed_pts} and the Riemann--Hurwitz theorem in combination with the genus formula for $X_0^D(N)$ (\cref{prop: genus_formula}), one can compute the genus of every Atkin--Lehner quotient of $X_0^D(N)$. A function handling these genus computations is implemented in the file \texttt{quot{\_}genus.m} in \cite{MPSSRep}. 

The following theorem of Ribet, relating the Jacobian of $X_0^D(N)$ to a sub-quotient of the Jacobian of the modular curve $X_0(DN)$, is crucial to our implementation of a finite field point count algorithm for $X_0^D(N)$ in \cref{section: point_counts}. While Ribet's original result in \cite{Ribet80}, as well as the work of Bertolini--Darmon \cite[\S 1]{BD96} in which our statements regarding the Atkin--Lehner action appear, each contain a squarefree restriction on the level $N$, this hypothesis can be removed. One can find the relevant results on bases of spaces of modular forms and embeddings of quaternion orders in the work of Hijikata--Pizer--Shemanske \cite{HPS89a, HPS89b}, and in particular we refer the reader to the relevant statement reproved by Martin in \cite[Theorem 1.1 (ii)]{Martin}. One can also see \cite[Corollary 3.3]{Roberts} for an alternate proof of the general version.

\begin{theorem}\cite{Ribet80, BD96, HPS89a, HPS89b}\label{Ribet_isog}
Let $w_m(1,DN)$ denote the involution on $X_0(DN)$ and let $w_m(D,N)$ denote the involution on $X_0^D(N)$ corresponding to a Hall divisor $m$ of $DN$. There is an isogeny defined over~$\Q$
\[
\Psi \colon \Jac(X_0(DN))^{D-\new} \longrightarrow \Jac(X_0^D(N))
\]
such that the following holds: for each $m \parallel DN$ and each isogeny factor $A_f$ of $\Jac(X_0^D(N))$, letting $\epsilon_f(D,N)$ and $\epsilon_f(1,DN)$ denote, respectively, the signs of the actions of $w_m(D,N)$ and $w_m(1,DN)$ on $A_f$, we have 
\begin{equation}\label{eqn: AL_action_Ribet}
   \epsilon_f(D,N) = [-1]^{\omega(\gcd(D,m))} \epsilon_f(1,DN).
\end{equation} 
\end{theorem}

\begin{remark}
The Jacobian $\Jac(X_0(DN))$ of $X_0(DN)$ decomposes into irreducible isogeny factors $A_f$, which correspond to distinguished cusp forms $f$. To obtain the $D$-new part, we take those factors with conductor divisible by $D$. Since $A_f$, when $f$ is a newform of level $DN$, has conductor $(DN)^{\dim A_f}$ (see \cite{KW09a} and \cite{KW09b} for the proof of Serre's modularity conjecture, and \cite[Théorème~5]{Ser87} for its consequence we are interested in), 
hence, by the classical theory of modular curves and oldforms (see \cite[Theorem~5]{AL70} and \cite[Chapters~5 and 6]{DS05}), we have
\begin{equation*}
\Jac(X_0(DN))^{D-\new} \sim\prod_{n|N}\Jac^{\new}(X_0(Dn))^{\sigma_0\left(\frac{N}{n}\right)},
\end{equation*}
where $\sigma_0(m)$ is the sum of positive divisors of the integer $m$ and $\Jac^{\new}(X_0(Dn))$ denotes the new part of $\Jac(X_0(Dn))$, i.e., the part where the isogeny factors $A_f$ are associated to newforms $f$ of level $Dn$. 

The isogeny factors of the Jacobian of an Atkin--Lehner quotient $X_0^D(N)/W$, and the corresponding cusp forms, can then be determined by the action of $W$ on these isogeny factors as given in \cref{eqn: AL_action_Ribet}. 
\end{remark}

\subsection{Known results on automorphism groups}\label{subsection: auts}

It is expected that
\begin{equation}\label{eq:aut_conj}
\textnormal{Aut}(X_0^D(N)) = W_0(D,N)
\end{equation}
for all but finitely many Shimura curves $X_0^D(N)$ with squarefree level $N$ (see \cite[Conjecture 1.1]{KR08}). This is known if one restricts to the case $D=1$ of the modular curves $X_0(N)$, even for general $N$, by work of Ogg \cite{Ogg77}, Kenku--Momose \cite{KM88}, Elkies \cite{Elkies90}, and Harrison \cite{Har11}. Here, the only curves $X_0(N)$ of genus at least~$2$ with automorphisms which are not Atkin--Lehner involutions are those of level $N \in \{ 37, 63,108 \}$. 

In \cref{section: automorphisms}, we provide methods for checking this equality, along with related experimental results for $D>1$. Here, we recall related results of Kontogeorgis--Rotger. 

\begin{proposition}\cite[Proposition 1.5]{KR08}\label{prop: KR_quotient}
Suppose $N$ is squarefree and coprime to $D$ and let $W \leq W_0(D,N)$. If $g(X_0^D(N)/W) \geq 2$, then all automorphisms of $X_0^D(N)/W$ over $\overline{\Q}$ are defined over $\Q$ and 
\[ \textnormal{Aut}(X_0^D(N)/W) \cong \left(\Z/2\Z\right)^s \] 
for some $s \geq \omega(DN) - \textnormal{ord}_2\left(|W|\right)$. 
\end{proposition}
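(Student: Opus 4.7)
The lower bound on $s$ is essentially formal. Since $W_0(D,N) \cong (\Z/2\Z)^{\omega(DN)}$, and $W$ is a subgroup of this elementary abelian $2$-group with $|W| = 2^{\textnormal{ord}_2(|W|)}$, the quotient $W_0(D,N)/W$ is itself elementary abelian of rank $\omega(DN) - \textnormal{ord}_2(|W|)$. Each of its cosets descends to a well-defined automorphism of $X_0^D(N)/W$, yielding an injection $W_0(D,N)/W \hookrightarrow \Aut(X_0^D(N)/W)$, which gives the claimed inequality on $s$.

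For the structural statement, set $Y := X_0^D(N)/W$. Since $g(Y) \geq 2$, the Torelli theorem embeds $\Aut(Y/\overline{\Q})$ into $\Aut(\Jac(Y)/\overline{\Q})$. I would combine Ribet's isogeny (\cref{Ribet_isog}) with the oldform decomposition stated after it to identify $\Jac(Y)$, up to $\Q$-isogeny, with a product $\prod_f A_f$ of $\Q$-simple modular abelian varieties indexed by Galois orbits of weight-$2$ newforms of levels $Dn$ (for $n \mid N$) whose Atkin--Lehner eigenvalues satisfy the sign-twisted compatibility with $W$ dictated by \eqref{eqn: AL_action_Ribet}. The core step is to show that every automorphism of $Y$ commutes with the Hecke action on $\Jac(Y)$, and hence preserves each isotypic factor $A_f$. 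Assuming this, the squarefreeness of $N$ ensures that the torsion units in the $\overline{\Q}$-endomorphism algebra of each $A_f$ reduce to $\pm 1$, so the induced action of $\Aut(Y/\overline{\Q})$ on $\Jac(Y)$ factors through $\prod_f \{\pm 1\}$. By Torelli, $\Aut(Y/\overline{\Q})$ is then elementary abelian of exponent at most $2$.

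The descent from $\overline{\Q}$ to $\Q$ follows readily from this structural analysis: the factors $A_f$ and the Hecke correspondences are all defined over $\Q$, so for any $\phi \in \Aut(Y/\overline{\Q})$ and $\sigma \in \Gal(\overline{\Q}/\Q)$, the Galois conjugate $\phi^{\sigma}$ acts on $\Jac(Y)$ by the same tuple of signs $(\epsilon_f) \in \prod_f \{\pm 1\}$ as $\phi$, forcing $\phi^{\sigma} = \phi$ once more by Torelli. The main obstacle in this plan is the Hecke-equivariance step: Shimura curves with $D>1$ have no cusps, so one cannot argue as in the classical modular-curve setting, and one must instead rely on a CM-divisor characterization of Hecke correspondences (where the squarefreeness of $N$ plays a decisive role), along the lines developed by Kontogeorgis and Rotger.
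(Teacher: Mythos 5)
First, note that the paper does not prove this proposition at all: it is quoted verbatim from Kontogeorgis--Rotger \cite[Proposition 1.5]{KR08}, so there is no in-paper argument to compare yours against. Judged on its own terms, your outline has the right ingredients in places (Torelli, Ribet's isogeny, totally real endomorphism fields $E_f$, Galois descent), but it contains a genuine gap beyond the Hecke-equivariance step you flag yourself. You write $\Jac(Y) \sim \prod_f A_f$ and conclude that, since the torsion units of each $\End^0(A_f) = E_f$ are $\pm 1$, the automorphism group is elementary $2$-abelian. But the decomposition has multiplicities: as the paper itself records (the remark after \cref{Ribet_isog} and \cref{theo2.2_DLMS25}), a newform of level $Dn$ with $n \mid N$, $n \neq N$, contributes to $\Jac(X_0^D(N))^{D\textnormal{-new}}$ with multiplicity $\sigma_0(N/n) > 1$, so $\Jac(Y) \sim \prod_f A_f^{m_f}$ with $m_f > 1$ whenever $N > 1$. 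Commuting with the Hecke algebra only forces $\phi^*$ to preserve each isotypic block $A_f^{m_f}$, because $\T$ acts diagonally and its centralizer in $\End^0(A_f^{m_f}) \cong M_{m_f}(E_f)$ is the whole matrix algebra; and $\GL_{m_f}(E_f)$ contains torsion of order $3$, $4$, $6$, etc. So even granting Hecke-equivariance, neither ``exponent $2$'' nor commutativity follows from your argument, and the same defect undermines the descent step, which relies on $\phi^*$ being determined by a tuple of signs.

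The missing input in \cite{KR08} is precisely what controls this: for squarefree $N$ one knows (no CM forms, no inner twists) that $\End^0_{\overline{\Q}}(J) = \End^0_{\Q}(J)$, which via Torelli gives definability over $\Q$, while the elementary $2$-abelian structure is obtained from additional rigidity --- in particular the \v{C}erednik--Drinfeld $p$-adic uniformization of $X_0^D(N)$ at primes $p \mid D$ (the curve is a Mumford curve over $\Q_{p^2}$, and its automorphism group is computed inside the normalizer of the uniformizing arithmetic group, which is a $2$-group of Atkin--Lehner type) --- rather than from torsion in the endomorphism algebra of the Jacobian. A smaller point: your lower bound on $s$ needs a sentence explaining why the map $W_0(D,N)/W \to \Aut(X_0^D(N)/W)$ is injective, i.e., why no $w_m \notin W$ can induce the identity on the quotient; this is standard but not automatic.
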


\begin{proposition}\cite[Corollary 1.8]{KR08}\label{prop: KR_top_curve}
Suppose $N$ is squarefree and coprime to $D$. If $g(X_0^D(N)) \geq 2$, then 
\[ \textnormal{Aut}(X_0^D(N)) \cong \left(\Z/2\Z\right)^s \] 
for some $s \in \{\omega(DN), \omega(DN)+1\}$. 
\end{proposition}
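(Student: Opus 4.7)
The plan is to first invoke \cref{prop: KR_quotient} with the trivial subgroup $W = \{1\}$, which immediately yields that $\Aut(X_0^D(N)) \cong (\Z/2\Z)^s$ is an elementary abelian $2$-group, all of whose elements are defined over $\Q$, of rank $s \geq \omega(DN)$. Since $W_0(D,N) \leq \Aut(X_0^D(N))$ has rank exactly $\omega(DN)$, the only remaining content is the upper bound $s \leq \omega(DN) + 1$, equivalently that the quotient $G \colonequals \Aut(X_0^D(N))/W_0(D,N)$ has order at most $2$.

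To bound $|G|$, I would transport the action of $\Aut(X_0^D(N))$ through Ribet's isogeny (\cref{Ribet_isog}) to obtain a faithful action of $G$ on $\Jac(X_0(DN))^{D-\new}$ commuting with the Hecke operators $T_p$ for $p \nmid DN$. Using the isogeny decomposition
\[ \Jac(X_0(DN))^{D-\new} \sim \prod_{n \mid N} \Jac^{\new}(X_0(Dn))^{\sigma_0(N/n)} \]
together with multiplicity-one for newforms, any element of $G$ must stabilize each Galois orbit of simple isogeny factors $A_f$ and, modulo the Atkin--Lehner sign specified by \cref{eqn: AL_action_Ribet}, act on each factor by a scalar squaring to $1$. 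Combined with Hecke compatibility, any non-trivial element $\bar{u} \in G$ is thus determined by a consistent sign pattern on newform orbits.

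The hard step is promoting this to the sharp bound $|G| \leq 2$. I would combine two inputs: a Riemann--Hurwitz analysis of the tower $X_0^D(N) \to X_0^D(N)/W_0(D,N) \to X_0^D(N)/\Aut(X_0^D(N))$, using Ogg's fixed point enumeration (\cref{prop: Ogg_fixed_pts}) and the genus formula (\cref{prop: genus_formula}) to restrict possible signatures of the cover; and a rigidity argument, in the spirit of Rotger's work on quaternionic automorphisms, showing that any non-Atkin--Lehner involution must arise from a normalizer element in $N_{B^\times}(\O_N)$ outside $\Q^\times \O_N^\times$ of a very specific form, of which at most one independent class can exist. The chief obstacle is exhibiting this single allowed ``exceptional'' class and proving its uniqueness; this quaternionic rigidity has no direct analogue in the naive modular picture and constitutes the genuine new content of the result over \cref{prop: KR_quotient}.
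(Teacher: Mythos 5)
There is a genuine gap. First, note that the paper does not prove this statement at all: it is imported verbatim as \cite[Corollary 1.8]{KR08}, so there is no internal proof to compare against. Judged on its own terms, your first paragraph is fine --- applying \cref{prop: KR_quotient} with $W=\{1\}$ does give that $\Aut(X_0^D(N))\cong(\Z/2\Z)^s$ with all automorphisms defined over $\Q$ and $s\geq\omega(DN)$ --- but that is exactly the content already contained in \cref{prop: KR_quotient}. The entire additional content of the corollary is the upper bound $s\leq\omega(DN)+1$, and you do not establish it: your last paragraph is a plan (``I would combine two inputs\dots'') that ends by conceding that ``the chief obstacle'' --- exhibiting and proving uniqueness of the single exceptional class --- remains open. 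A proof cannot defer its only nontrivial step.

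Two of the intermediate claims in your second paragraph are also shaky. (i) It is not automatic that an automorphism of the curve induces an endomorphism of $\Jac(X_0(DN))^{D-\new}$ commuting with the Hecke operators; the possible failure of Hecke-equivariance is precisely what makes ``exceptional'' automorphisms hard to exclude, so this needs an argument (e.g.\ that the $\Q$-endomorphism algebra is generated by the Hecke algebra), not an assertion. (ii) ``Multiplicity one'' does not hold for the $D$-new part when $N$ has proper divisors: by the decomposition recalled after \cref{Ribet_isog}, a factor $A_f$ attached to a newform $f$ of level $Dn$ with $n\mid N$, $n\neq N$, occurs with multiplicity $\sigma_0(N/n)=2^{\omega(N/n)}>1$, so even a Hecke-equivariant involution acts on that isotypic block through $\mathrm{GL}_{m_f}(E_f)$ rather than by a scalar sign, and your ``consistent sign pattern'' description breaks down. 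If you want a self-contained argument you must either reproduce the analysis of Kontogeorgis--Rotger or simply cite their Corollary 1.8, as the paper does.
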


In the following lemma, giving sufficient criteria for the equality $\Aut(X_0^D(N)) = W_0(D,N)$, the numbers $e_k(D,N)$ for $k \in \{3,4\}$ are as in \cref{prop: genus_formula}.

\begin{lemma}\cite[Theorems 1.6 and 1.7]{KR08}\label{lemma: KR_all_atkin_lehner}
Suppose that $N$ is squarefree and coprime to $D$ and that $g \colonequals g(X_0^D(N)) \geq 2$. If at least one of the following statements holds:
\begin{enumerate}
    \item $e_3(D,N) = e_4(D,N) = 0$;
    \item $3 \mid DN$, for all primes $p \mid N$ we have $\left(\frac{-3}{p}\right) \ne -1$, and for at most one prime $\ell \mid D$ we have $\left(\frac{-3}{\ell}\right) = 1$;
    \item $\omega(DN) = \textnormal{ord}_2(g-1)+2$;
    \item there is a prime $p \nmid 2DN$ such that $\omega(DN) = \textnormal{ord}_2\left(|X_0^D(N)(\F_p)|\right) + 1$;
\end{enumerate}
then $\Aut(X_0^D(N)) = W_0(D,N)$. 
\end{lemma}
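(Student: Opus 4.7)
The strategy is to combine \cref{prop: KR_top_curve} with a case-by-case obstruction to the existence of a non-Atkin--Lehner involution. Under the hypotheses of the lemma, \cref{prop: KR_top_curve} gives $\Aut(X_0^D(N)) \cong (\Z/2\Z)^s$ with $s \in \{\omega(DN), \omega(DN)+1\}$, so the only thing to show is that the upper alternative $s = \omega(DN)+1$ fails, i.e., that no involution $u$ outside $W_0(D,N)$ exists. The common toolkit I would use consists of Riemann--Hurwitz applied to $X_0^D(N) \to X_0^D(N)/G$ for various subgroups $G \leq \Aut(X_0^D(N))$, the explicit classification of Atkin--Lehner fixed points as CM points in \cref{prop: Ogg_fixed_pts} (together with Ogg's optimal-embedding counts), the genus formula \cref{prop: genus_formula}, and reduction modulo primes of good reduction. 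Throughout I assume for contradiction that such a $u$ exists, so that $G \colonequals W_0(D,N) \cdot \langle u \rangle \cong (\Z/2\Z)^{\omega(DN)+1}$ acts faithfully on $X_0^D(N)$ over $\Q$.

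For (1) and (2), the plan is geometric. In each of the $2^{\omega(DN)+1}-1$ non-trivial involutions of $G$, the fixed locus over $\overline{\Q}$ consists of points at which the stabilizer in $G$ has order a power of two; by descent via the subquotient maps, any such point projects to an elliptic point on some $X_0^D(N)/W$ of order $2$ or $3$ (for the non-Atkin--Lehner involutions, this is where the fact that $u$ normalises $W_0(D,N)$ is used, and one crucially invokes the fact that $X_0^D(N)$ has no real points so all fixed loci are accounted for by CM points). In case~(1) the vanishing $e_3(D,N) = e_4(D,N) = 0$ forces the Atkin--Lehner and extra fixed loci to be empty, at which point Riemann--Hurwitz applied to the cover $X_0^D(N) \to X_0^D(N)/G$ yields the impossibility $g(X_0^D(N)) - 1 = 2^{\omega(DN)+1}(g' - 1)$ with $g'$ non-negative, which clashes with the actual value of $g - 1$ computed from \cref{prop: genus_formula} (since the $2$-adic valuation is too small). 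Case~(2) is handled analogously after using the Kronecker-symbol hypotheses to show $e_3(D,N)$ is small and concentrated on a single Atkin--Lehner quotient, so that the only room for an extra involution to have the required fixed behaviour is ruled out by the optimal-embedding counts for the CM fixed points of each $w_m$.

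For (3), the plan is purely $2$-adic. Riemann--Hurwitz for $X_0^D(N) \to X_0^D(N)/G$ reads
\[
2(g-1) = 2^{\omega(DN)+1}\bigl(2g'-2\bigr) + R,
\]
where $R \geq 0$ counts the ramification, weighted by (stabiliser size $-1$). Each ramification contribution comes from a point whose stabiliser in $G$ has order a power of $2$ at least $2$, so $R$ is a sum of terms each at least $1$, and in fact $R \equiv 0 \pmod{2}$ since stabilisers in an elementary abelian $2$-group occur in even-sized orbits over the branch locus. The hypothesis $\omega(DN) = \ord_2(g-1)+2$ is equivalent to $\ord_2(2(g-1)) = \omega(DN) - 1$, whereas the right-hand side has $2$-adic valuation at least $\omega(DN)$ unless one can produce enough ramification of the precise shape needed to cancel high powers of $2$; a careful accounting using \cref{prop: Ogg_fixed_pts} and the fact that no fixed-point of $u$ can duplicate a fixed point of any $w_m$ shows this is impossible.

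For (4), the plan mirrors (3) but is executed on the $\F_p$-fibre. Using that each degree-$2$ subcover $Y \to Y/\langle v \rangle$ in a tower realising $X_0^D(N) \to X_0^D(N)/G$ satisfies $|Y(\F_p)| \equiv |\textnormal{Fix}_v(\F_p)| \pmod{2}$, and that fixed points of non-Atkin--Lehner involutions necessarily project through Atkin--Lehner fixed points (hence through CM points whose $\F_p$-rationality is controlled by how $p$ splits in the relevant imaginary quadratic orders from \cref{prop: Ogg_fixed_pts}), one deduces a lower bound $\ord_2\bigl(|X_0^D(N)(\F_p)|\bigr) \geq \omega(DN)$ whenever $|G| = 2^{\omega(DN)+1}$ acts faithfully on the reduction. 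This contradicts the hypothesis $\omega(DN) = \ord_2(|X_0^D(N)(\F_p)|) + 1$. The main obstacle across the proof is case~(4): turning the orbit-counting into a sharp inequality requires ruling out degenerate configurations in which many orbits of $G$ on $X_0^D(N)(\F_p)$ collapse to small size, and this is what forces the inductive argument through each intermediate Atkin--Lehner subquotient rather than directly on the full cover.
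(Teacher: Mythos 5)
First, a point of comparison: the paper does not prove this lemma at all --- it is quoted directly from Kontogeorgis--Rotger \cite[Theorems 1.6 and 1.7]{KR08} --- so your proposal is a from-scratch reconstruction rather than a variant of an in-paper argument. Judged on its own terms, the reconstruction has genuine gaps in each case; the most concrete ones are in parts (3) and (4), where the arithmetic you actually establish is too weak to produce the stated contradictions.

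In part (3) you write Riemann--Hurwitz as $2(g-1) = 2^{\omega(DN)+1}(2g'-2) + R$ and conclude only that $R$ is even; this gives $2 \mid 2(g-1)$, which says nothing, whereas contradicting $\textnormal{ord}_2(2(g-1)) = \omega(DN)-1$ requires $2^{\omega(DN)} \mid R$. The missing ingredient is that point stabilizers for a faithful tame action on a smooth curve act faithfully on the one-dimensional tangent space and are therefore \emph{cyclic}; in the elementary abelian $2$-group $G \cong (\Z/2\Z)^{\omega(DN)+1}$ every stabilizer thus has order at most $2$, so every ramified point lies in a $G$-orbit of size exactly $2^{\omega(DN)}$ and $R$ is a sum of multiples of $2^{\omega(DN)}$, which finishes (3). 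The same observation applied to the action of $G$ on $X_0^D(N)(\F_p)$ (all automorphisms are defined over $\Q$ by \cref{prop: KR_quotient} and reduce injectively mod $p$) forces every orbit to have size $2^{\omega(DN)}$ or $2^{\omega(DN)+1}$, hence $2^{\omega(DN)} \mid \left|X_0^D(N)(\F_p)\right|$, which is exactly the contradiction needed in (4). Your route to (4) instead relies on the assertion that fixed points of a putative exotic involution ``necessarily project through Atkin--Lehner fixed points'' and hence through CM points: there is no justification for this (nothing constrains the fixed locus of an involution outside $W_0(D,N)$ to be CM), and the mod-$2$ congruence $|Y(\F_q)| \equiv |\mathrm{Fix}_v(\F_q)| \pmod 2$ cannot by itself yield divisibility by $2^{\omega(DN)}$. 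Parts (1) and (2) are left at the level of a plan (``a careful accounting \ldots shows this is impossible''), and the one concrete justification offered there --- that the absence of real points accounts for all fixed loci by CM points --- is a non sequitur. As written, the proposal is an outline in which the decisive step of each case is missing.
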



\section{Gonalities}\label{section: gonalities}

\subsection{Basic properties}

\begin{definition}
Let $F$ be a perfect field with algebraic closure $\overline{F}$, and let $X$ be a curve over $F$. The \emph{gonality} $\gamma_F(X)$ of $X$ (over $F$) is the least degree of a map from $X$ to the projective line $\P^1_F$ over $F$. A map $X \to \mathbb{P}^1_F$ over $F$ of degree $\gamma_F(X)$ is called a \emph{gonal map} for $X$. 
\end{definition}

For $L/F$ a field extension, we have the base change of $X$ to $L$
\[ X_L := X \otimes_{\Spec K} \Spec L, \]
and we denote the gonality of this base change by
\[ \gamma_L(X) := \gamma_L(X_L). \]
In particular, the \emph{geometric gonality} of $X$ is defined to be $\gamma_{\overline{F}}(X)$. We recall the following general facts about gonalities which we use often.

\begin{proposition}\cite[Proposition A.1]{Poonen}\label{Prop: Poonen_gonality}
Let $X$ be a curve of genus $g$ over a field $F$. 
\begin{enumerate}
    \item If $L/F$ is a field extension, then $\gamma_L(X) \leq \gamma_F(X)$. 
    \item If $F$ is algebraically closed and $L/F$ is a field extension, then $\gamma_L(X) = \gamma_F(X)$. 
    \item If $g>1,$ then $\gamma_F(X) \leq 2g-2$. For each $g>1$, there exist $F$ and $X/F$ for which equality holds. 
    \item If $X(F) \neq \varnothing$, then $\gamma_F(X) \leq g+1$. If further $g \geq 2$, then $\gamma_F(X) \leq g$ (and, again, equality is possible for each $g$). 
    \item If $F$ is algebraically closed, then $\gamma_F(X) \leq \lfloor \frac{g+3}{2} \rfloor$. Equality holds for a general curve of genus~$g$ over~$F$.
    \item If $\pi\colon X \to Y$ is a dominant rational map of curves over~$F$, then 
    \[
    \gamma_F(Y) \leq \gamma_F(X)\leq \textnormal{deg}(\pi)\cdot \gamma_F(Y).
    \]
\end{enumerate}
\end{proposition}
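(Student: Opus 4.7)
The plan is to work through the six parts using base change, Riemann--Roch, and Brill--Noether theory. Parts (1) and the upper bound in (6) are immediate: the base change of a gonal map $X \to \P^1_F$ yields a map $X_L \to \P^1_L$ of the same degree, and composing a gonal map $Y \to \P^1_F$ with $\pi$ gives a map $X \to \P^1_F$ of degree $\deg(\pi) \cdot \gamma_F(Y)$. For (2), I would use that the Brill--Noether locus $W^1_d \subseteq \Pic^d(X)$, parametrising line bundles of degree $d$ with at least two global sections, is a closed subscheme of finite type defined over $F$; since $F$ is algebraically closed, any $L$-point of $W^1_d$ descends to an $F$-point via a spreading-out and Nullstellensatz specialisation argument, and a choice of two-dimensional $F$-subspace of global sections (possible since the relevant Grassmannian has $F$-points) yields a degree $\leq d$ map to $\P^1_F$.

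Parts (3) and (4) reduce to Riemann--Roch computations. For (3), since $g \geq 2$ we have $\dim_F H^0(X, K_X) = g \geq 2$, so any pair of $F$-linearly independent canonical sections spans a pencil whose residual base-point-free part has degree at most $2g-2$. For the bound $\gamma_F(X) \leq g+1$ in (4), a rational point $P \in X(F)$ yields $\dim_F H^0\bigl(X, \O((g+1)P)\bigr) \geq 2$ by Riemann--Roch, producing a pencil of degree $g+1$. The refined bound $\gamma_F(X) \leq g$ for $g \geq 2$ follows for $g = 2$ from $|K_X|$ being a base-point-free $F$-rational pencil of degree $2$, and for $g \geq 3$ from a classical argument combining Riemann--Roch with a careful analysis of effective divisors of degree $g$ through $P$ on the canonical model; I would cite the literature for both this step and for the sharpness examples asserted in (3) and (4). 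For (5), the upper bound $\gamma_{\bar F}(X) \leq \lfloor (g+3)/2 \rfloor$ is Brill--Noether existence, since the Brill--Noether number $\rho(g,1,d)$ is non-negative for $d = \lfloor (g+3)/2 \rfloor$, making $W^1_d$ non-empty; sharpness for the general curve is the Griffiths--Harris upper bound theorem, which I would cite as a black box.

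Finally, the lower bound $\gamma_F(Y) \leq \gamma_F(X)$ in (6) is obtained by pushforward. Given a gonal pencil $\{D_t\}_{t \in \P^1}$ of effective divisors of degree $d = \gamma_F(X)$ on $X$, the family $\{\pi_* D_t\}_{t \in \P^1}$ on $Y$ consists of linearly equivalent effective divisors of degree $d$. If this pushforward family were constant, every $D_t$ would have support in the fixed finite set $\pi^{-1}\bigl(\operatorname{supp}(\pi_* D_0)\bigr)$, forcing $\{D_t\}$ to factor through a finite set of effective divisors and contradicting its one-dimensionality; thus the pushforward yields a genuine one-parameter pencil on $Y$. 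Removing any base divisor gives a map $Y \to \P^1_F$ of degree at most $d$, and $F$-rationality is preserved because $\pi_*$ is defined over $F$. The principal obstacle in the plan is the Griffiths--Harris sharpness statement in (5), which is genuinely deep; the remaining ingredients are all standard Riemann--Roch computations and formal manipulations, and the one subtlety in (6), namely $F$-rationality of the pushforward pencil, is automatic from the definition of $\pi_*$.
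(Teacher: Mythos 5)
The paper gives no proof of this proposition at all: it is quoted verbatim from Poonen's appendix (\cite[Proposition A.1]{Poonen}), so there is no in-paper argument to compare against. Your sketch reconstructs the standard proofs, which are essentially those of Poonen's appendix: parts (1), (3), (5) and the upper bound in (6) are handled exactly as one should, and the specialisation argument for (2) (non-emptiness of $W^1_d$ over $L$ implies non-emptiness over the algebraically closed base $F$) is the right mechanism. Two spots are thinner than they need to be. For the bound $\gamma_F(X)\le g$ in (4) you defer to the literature, but the argument is one line: with $P\in X(F)$ and $g\ge 2$, the divisor $D=K_X-(g-2)P$ has degree $g$, and Riemann--Roch gives $h^0(D)=1+h^0((g-2)P)\ge 2$ since $(g-2)P$ is effective, so $|D|$ contains an $F$-rational pencil of degree at most $g$. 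For the lower bound in (6), the family $\{\pi_*D_t\}$ is a non-constant one-parameter family of linearly equivalent effective degree-$d$ divisors but not literally a linear pencil; the fix is immediate, since two distinct members already force $h^0\bigl(Y,\O(\pi_*D_0)\bigr)\ge 2$ (a statement that descends to $F$ by flat base change of cohomology), and any two-dimensional $F$-subspace of sections yields a map $Y\to\P^1_F$ of degree at most $d$ after removing the base locus. Your non-constancy argument --- only finitely many effective degree-$d$ divisors are supported on the finite set $\pi^{-1}(\operatorname{supp}\pi_*D_0)$ --- is correct. With these two points filled in, the proposal is a complete proof modulo the genuinely deep sharpness statements in (3)--(5), which you correctly identify as citations.
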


\subsection{Gonalities and reduction}
Let $X$ be a curve over $\Q$, let $p$ be a prime of good reduction for $X$, and let $q=p^r$ where $r$ is a positive integer. We let $\gamma_{\F_q}(X)$ denote the gonality of the base change of $X$ to $\F_q$. In this setting, we have the inequality
\[ \gamma_{\F_q}(X) \leq \gamma_{\Q}(X), \]
as from a rational function on $X$ over $\Q$ corresponding to a gonal map one obtains via base change a rational function over $\F_q$ of degree at most $\gamma_\Q(X)$. 
In particular, the Shimura curve $X_0^D(N)$ has good reduction at primes $p \nmid DN$, and, hence, if $q = p^r$, we get 
\[
\gamma_{\F_q}(X_0^D(N)) \leq \gamma_{\Q}(X_0^D(N)).
\]

If $\widetilde{X}$ is a curve over $\F_q$ with gonality $d$ and $f$ is a gonal map for $\widetilde{X}$, then there are at most $d$ points of degree $1$ in the fiber of $f$ above each of the $q+1$ rational points on $\mathbb{P}^1_{\F_q}$. Therefore, we have the following gonality bounds via point counts. 

\begin{lemma}\label{lemma: gonality_bound_by_point_counts}
Let $X$ be a curve over $\Q$, let $p$ be a prime of good reduction for $X$, and let $q = p^r$ for $r$ a positive integer. We have
\[ \gamma_{\Q}(X) \geq \gamma_{\F_q}(X) \geq \dfrac{\#X(\F_q)}{q+1}. \]
\end{lemma}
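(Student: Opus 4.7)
The plan is to handle the two inequalities separately; both arguments are essentially elementary, and the paragraph preceding the statement already sketches each of them.

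For the first inequality $\gamma_\Q(X) \geq \gamma_{\F_q}(X)$, I would invoke the good reduction hypothesis: $X$ extends to a smooth proper model over $\Z_{(p)}$, and given a gonal map $X \to \P^1_\Q$ over $\Q$, I would spread it out to a morphism of the models and base change to $\F_q$, obtaining a map from the reduction $\widetilde{X}$ to $\P^1_{\F_q}$ whose degree is preserved (the generic and special fibers have the same degree since the map is flat in this setup, or, more concretely, since rational functions on a smooth projective curve reduce to rational functions of the same degree). Hence $\gamma_{\F_q}(X) \leq \gamma_\Q(X)$.

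For the second inequality $\gamma_{\F_q}(X) \geq \#X(\F_q)/(q+1)$, I would carry out a fiber-counting argument. Let $d = \gamma_{\F_q}(X)$ and pick a gonal map $f\colon \widetilde{X} \to \P^1_{\F_q}$. Every $\F_q$-rational point $P \in \widetilde{X}(\F_q)$ maps to an $\F_q$-rational point of $\P^1_{\F_q}$, and for each $Q \in \P^1(\F_q)$ the scheme-theoretic fiber $f^{-1}(Q)$ is zero-dimensional of degree $d$ over $\F_q$, so it can contain at most $d$ points that are individually defined over $\F_q$. Summing over the $q+1$ elements of $\P^1(\F_q)$ yields
\[ \#\widetilde{X}(\F_q) \;=\; \sum_{Q \in \P^1(\F_q)} \#\bigl(f^{-1}(Q)(\F_q)\bigr) \;\leq\; d(q+1), \]
and rearranging gives the claim, using that $\#X(\F_q)$ means by convention the point count on the smooth reduction $\widetilde{X}$.

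There is no genuine obstacle; the only point worth being explicit about is the (standard) identification of $\#X(\F_q)$ with $\#\widetilde{X}(\F_q)$ via the good reduction model, and the fact that an $\F_q$-point in a zero-dimensional $\F_q$-scheme of degree $d$ contributes at least $1$ to the degree, bounding the number of such points by $d$.
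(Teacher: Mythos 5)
Your argument is correct and is essentially the same as the paper's, which likewise obtains $\gamma_{\F_q}(X)\leq\gamma_\Q(X)$ by reducing a gonal map modulo $p$ and then bounds $\#X(\F_q)$ by counting at most $d$ rational points in each fiber over the $q+1$ points of $\P^1(\F_q)$. Your write-up is merely a bit more explicit about spreading out over the good-reduction model; no substantive difference.
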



\section{Finite fields point counts}\label{section: point_counts}

In this section, we describe \cref{algorithm: introduction_version}, which is used for our computations of point counts. It is a variation of the algorithms contained in \cite{DLMS} and \cite{DLMS26}.

By Ribet's isogeny (\cref{Ribet_isog} above), we can write the Jacobian of the Shimura curve $X_0^D(N)$ in terms of isogeny factors of the Jacobian of the modular curve $X_0(DN)$.
In order to work with the latter object, we recall the following results from \cite{DLMS} and \cite{DLMS26}.

\begin{lemma}\label{prop4.4_DLMS}
Let $\mathcal F$ be a finite subset of the set of the Galois orbits of normalized eigenforms of weight 2 of $\bigcup_{n>0}\mathcal S_2^{\textnormal{new}}(\Gamma_0(n))$, and let $J:=\prod_{[f]\in\mathcal F}A_f^{m_f}$, where $A_f$ is the abelian variety associated to Galois orbit $[f]$ of $f$ (see \cite[Definition~6.6.3]{DS05}) and $m_f>0$ being an integer. Moreover, let $V_\ell=\mathrm{Ta}_\ell(J)\otimes_{\Z_\ell} \Q_\ell$ be the $\Q_\ell$-vector space associated to the Tate module of $J$ for a prime $\ell$. 
Then, the characteristic polynomial of $\mathrm{Frob}_p$ acting on $V_\ell$ is 
$$
\prod_{[f]\in\mathcal F}\prod_{h\in [f]} (x^2-a_p(h)x+p)^{m_f},
$$
where $a_p(h)$ is the $p$-th Fourier coefficient of~$h$.
\end{lemma}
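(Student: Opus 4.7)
The plan is to decompose $V_\ell$ as a Galois representation via the product structure of $J$, reduce to the case of a single Galois orbit of newforms, and then invoke the Eichler--Shimura congruence relation together with multiplicity one. Since the $\ell$-adic Tate module is additive on products of abelian varieties,
\[ V_\ell \;\cong\; \bigoplus_{[f]\in\mathcal{F}} V_\ell(A_f)^{\oplus m_f} \]
as $\Q_\ell$-representations of $\Gal(\overline{\Q}/\Q)$, where $V_\ell(A_f) := \mathrm{Ta}_\ell(A_f)\otimes_{\Z_\ell}\Q_\ell$. The characteristic polynomial of $\Frob_p$ on $V_\ell$ therefore factors as the product over $[f]\in\mathcal{F}$ of the $m_f$-th power of its characteristic polynomial $P_f(x)$ on $V_\ell(A_f)$, and it suffices to show $P_f(x)=\prod_{h\in[f]}(x^2-a_p(h)x+p)$ for each orbit $[f]$ of newforms of some level $n$ coprime to $p$ (so that $A_f$ has good reduction at $p$).

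For an individual factor, we appeal to Shimura's construction of $A_f$ as the quotient $J_0(n)/I_f J_0(n)$, where $I_f$ is the kernel of the homomorphism from the rational Hecke algebra onto $K_f:=\Q(\{a_q(f)\}_q)$ sending $T_q\mapsto a_q(f)$ (see, e.g., \cite[\S6.6]{DS05}); in particular $\dim A_f=[K_f:\Q]$. The multiplicity-one theorem for newforms then yields that $V_\ell(A_f)$ is a free module of rank exactly $2$ over the étale $\Q_\ell$-algebra $K_f\otimes_\Q\Q_\ell$. The Eichler--Shimura congruence relation at the good prime $p\neq\ell$ asserts that on $V_\ell(A_f)$ the Frobenius satisfies $\Frob_p^2 - a_p(f)\cdot\Frob_p + p = 0$, and combined with rank-$2$ freeness this identifies the characteristic polynomial of $\Frob_p$ as a $K_f\otimes\Q_\ell$-linear endomorphism of $V_\ell(A_f)$ with exactly $x^2 - a_p(f)x + p$.

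To conclude, we pass to the $\Q_\ell$-characteristic polynomial by taking the norm from $K_f\otimes_\Q\Q_\ell$ down to $\Q_\ell$. Because this algebra is étale, its $\Q_\ell$-algebra homomorphisms into $\overline{\Q}_\ell$ are in canonical bijection with the field embeddings $\sigma\colon K_f\hookrightarrow\overline{\Q}$, and each such $\sigma$ sends $a_p(f)$ to $a_p(f^\sigma)$; as $\sigma$ varies, the conjugates $f^\sigma$ exhaust the Galois orbit $[f]$. Consequently $P_f(x)=\prod_{h\in[f]}(x^2-a_p(h)x+p)$, and multiplying over $[f]\in\mathcal{F}$ with multiplicities yields the stated formula. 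The only substantive input is the rank-$2$ freeness of $V_\ell(A_f)$ over $K_f\otimes\Q_\ell$, which is strong multiplicity one for newforms; everything else is a formal norm computation in an étale algebra.
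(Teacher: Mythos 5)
Your argument is correct, and it is the standard proof of this statement. Note, though, that the paper does not reprove the lemma at all: its ``proof'' is a one-line citation to \cite[Proposition 4.4]{DLMS}, so you have effectively reconstructed the argument that the cited reference carries out, rather than taken a different route. Your three steps (additivity of $V_\ell$ on products, the identification of $V_\ell(A_f)$ as a free rank-$2$ module over $K_f\otimes_\Q\Q_\ell$, and the descent of the characteristic polynomial by taking the norm over the embeddings of $K_f$, which enumerate the Galois orbit $[f]$) are exactly the right skeleton. One small point worth tightening: knowing only that $\Frob_p$ \emph{satisfies} $x^2-a_p(f)x+p$ on a free rank-$2$ module does not by itself force this quadratic to be the characteristic polynomial, since a scalar operator also satisfies a quadratic it is a root of; the clean way to close this is to record that the Eichler--Shimura relation gives $\tr(\Frob_p)=a_p(f)$ while the Weil pairing gives $\det(\Frob_p)=p$ on each $\lambda$-adic factor, whence the characteristic polynomial is $x^2-\tr(\Frob_p)x+\det(\Frob_p)=x^2-a_p(f)x+p$. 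Also, the rank-$2$ freeness is really a consequence of $H_1(A_f,\Q)$ being a $2$-dimensional $K_f$-vector space (i.e.\ of $\dim A_f=[K_f:\Q]$ in Shimura's construction) rather than of strong multiplicity one per se, but this is a matter of attribution, not of correctness.
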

\begin{proof}
See \cite[Proposition 4.4]{DLMS}.
\end{proof}

To apply the previous lemma, we need to compute $a_p(h)$ and $m_f$. For the former, we use the data available in \cite{LMFDB}. For the latter, we use the following result.

\begin{lemma}\label{theo2.2_DLMS26}
Given a modular curve $X_0(N)$, let $W$ be its group of Atkin--Lehner involutions. For each subgroup $K \leq W$, the Jacobian of the corresponding quotient satisfies the following isogeny relation over $\Q$
\[
\mathrm{Jac}(X_0(N)/K)\sim \textstyle\prod_f A_f^{m_f},
\]
where $f$ ranges among the newforms of weight $2$ and level $d$ for divisors $d \mid N$ and the multiplicities $m_f$ are as follows: let $K^\perp$ be the set of characters of $W$ that are trivial on $K$ and, for each $f$, denote $v:=\textnormal{ord}_\ell(N/d)$. Then
\[
m_f:=\sum_{\chi\in K^\perp}   \prod_{\ell^e \parallel N} \left(\tfrac{v+1}{2}+\varepsilon_{f,\ell^e}\chi(w_{\ell^e})\tfrac{1+(-1)^{v}}{4} \right),
\]
with the indices $\ell^e$ varying among all prime powers exactly dividing $N$ and $\varepsilon_{f,\ell^e}$ being the eigenvalue of the Atkin--Lehner involution $w_{\ell^e}$ applied to~$f$.
\end{lemma}
\begin{proof}
This follows from \cite[Theorem 2.2]{DLMS26}, taking $n_0=N$ and $n_{\textnormal{ns}}=1$.
\end{proof}

The steps of \cref{algorithm: introduction_version} for counting the number of points of $X_0^D(N)/W$ over the finite field $\F_q$, with $q=p^k$ and $p\nmid DN$, are as follows.
\begin{enumerate}
    \item For every positive integer $n\mid N$, choose a basis $\mathcal B(nD)$ of newforms of level $nD$ and let $\mathcal B:=\bigcup_{n\mid N} \mathcal B(nD)$ be the union of these bases.
    \item For every $f \in\mathcal B$, compute the eigenvalues with respect to the Atkin--Lehner operators $w_{\ell^e}$, for $\ell^e||DN$ (these data are available on the database \cite{LMFDB} for $DN\leq 10000$) and, then, use them for computing the multiplicity $m_f$ as in \cref{theo2.2_DLMS26}. We point out that we need to switch $\varepsilon_{f,\ell^e}$ with $-\varepsilon_{f,\ell^e}$ if $\ell\mid D$ by \cref{Ribet_isog}.
    \item Compute the Hecke eigenvalue $a_p(f)$ for every $f\in\mathcal B$ (these data are also available on the database \cite{LMFDB} for $DN\leq 10000$).
    \item If $k=1$, compute the Frobenius traces acting on $A_f$: 
    \[
    \mathrm{tr}(\mathrm{Frob}_p|A_f)=\sum_{h\in [f]} a_p(h),
    \]
    where $[f]$ is the Galois orbit of $f$.
    \item If $k>1$, for each $h\in [f]$, compute the (complex) roots $\alpha(h)$ and $\beta(h)$ of the polynomial $x^2-a_p(h)x+p=0$. Then compute
    \[
    \mathrm{tr}(\mathrm{Frob}_q|A_f)=\sum_{h\in [f]}\alpha(h)^k+\beta(h)^k.
    \]
    \item Finally, compute
    \[
    \#X_0^D(N)(\F_q)=q+1-\sum_{[f]} m_f \mathrm{tr}(\mathrm{Frob}_q|A_f),
    \]
    where the sum is taken over the distinct Galois orbits for $f\in \mathcal{B}$ and $m_f$ is defined in \Cref{theo2.2_DLMS26}.
\end{enumerate}


\section{Automorphisms of Shimura curves}\label{section: automorphisms}

Let $X/\Q$ be a curve and let $p$ be a prime of good reduction for~$X$. The automorphism group of $X$ over~$\Q$ injects into the automorphism group of the reduction of $X$ modulo~$p$ \cite[Proposition~3.38, Chapter~10]{Liu}. In particular, if $p \nmid DN$, then the automorphism group of the curve $X_0^D(N)$ over~$\Q$ injects into that of its reduction modulo~$p$.

Point counts over the extensions $\F_q$ for $q$ a power of $p$ yield constraints on the automorphism group of a curve over~$\F_p$. For instance, a constraint of this type, specific to the curves $X_0^D(N)$, is seen in Part~(4) of \cref{lemma: KR_all_atkin_lehner}. In this section, we use our point counts algorithm to implement checks on the automorphism group of $X_0^D(N)$ and compute evidence towards the conjecture mentioned in \cref{subsection: auts} (see \Cref{eq:aut_conj}).

By \cref{prop: KR_quotient}, we know that when $N$ is squarefree and coprime to~$D$ the automorphism group of every Atkin--Lehner quotient of $X_0^D(N)$ 
consists of involutions. Of course, every Atkin--Lehner quotient of $X_0^D(N)$ has a non-trivial Atkin--Lehner involution \emph{except possibly} for the full quotient $X_0^D(N)^* := X_0^D(N)/W_0(D,N)$. We expect that this star quotient has trivial automorphism group for all but finitely many pairs $(D,N)$, and the following instance of a result of Gonz\'{a}lez is a main tool we use to show this for a fixed pair.

\begin{lemma}\cite[Equation (2.2)]{Gonzalez17}\label{lemma: Gonzalez_no_involutions}
Let $q = p^r$ be a prime power, let $X$ be a curve of genus $g \geq 2$ over the finite field $\F_q$, and suppose that $X$ has a non-trivial automorphism of prime order $\ell$ over $\F_q$. For each integer $n \geq 1$, let 
\[ A_n := X(\F_{q^n}) \setminus \displaystyle\bigcup_{i=1}^{n-1} X(\F_{q^i}). \] 
Then 
\[ \sum_{n \geq 1, \gcd(n,\ell) = 1} n\cdot \textnormal{mod}\left(|A_n|/n,\ell\right) \leq \left\lfloor \dfrac{2g}{\ell-1} \right\rfloor + 2, \]
where $\textnormal{mod}\left(m,\ell\right)$ denotes the remainder upon division of $m$ by $\ell$. 
\end{lemma}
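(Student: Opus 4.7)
My plan is to leverage the action of $\sigma$ on each $A_n$ to extract a congruence modulo $\ell$, and then to bound the total geometric fixed-point count via Riemann--Hurwitz applied to the quotient by $\langle \sigma \rangle$.

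First, I would fix a nontrivial $\F_q$-rational automorphism $\sigma$ of $X$ of prime order $\ell$. The key observation is that since $\sigma$ is defined over $\F_q$, it commutes with the $q$-power Frobenius $\Frob_q$, hence preserves every $\Frob_q$-stable subset of $X(\Fqbar)$ and in particular preserves each $A_n$. Because $\ell$ is prime, every orbit of $\langle \sigma \rangle$ on $X(\Fqbar)$ has cardinality $1$ or $\ell$, so for each $n$ one obtains the congruence
\[ |A_n| \equiv |\textnormal{Fix}(\sigma) \cap A_n| \pmod{\ell}, \]
where $\textnormal{Fix}(\sigma) \subset X(\Fqbar)$ denotes the geometric fixed-point set. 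This immediately yields $\textnormal{mod}(|A_n|,\ell) \leq |\textnormal{Fix}(\sigma) \cap A_n|$, and summing over $n$ --- using that $X(\Fqbar)$ is the disjoint union of the $A_n$ --- produces
\[ \sum_{n \geq 1} \textnormal{mod}(|A_n|, \ell) \leq |\textnormal{Fix}(\sigma)|. \]

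Next, I would bound $|\textnormal{Fix}(\sigma)|$ using the Galois cover $\pi \colon X \to Y := X/\langle \sigma \rangle$ of degree $\ell$. Since $\ell$ is prime, the stabilizer in $\langle \sigma \rangle$ of any ramification point is the entire group, so the ramification locus of $\pi$ is exactly $\textnormal{Fix}(\sigma)$. Applying Riemann--Hurwitz --- and noting that in the wild case $p = \ell$ each ramified point contributes at least $\ell - 1$ to the ramification divisor --- yields
\[ 2g - 2 \geq \ell(2g_Y - 2) + (\ell - 1)\,|\textnormal{Fix}(\sigma)|, \]
where $g_Y$ denotes the genus of $Y$. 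Using $g_Y \geq 0$ and rearranging gives $|\textnormal{Fix}(\sigma)| \leq 2g/(\ell-1) + 2$. Since the left-hand side of the target inequality is a non-negative integer, taking floors delivers the claimed bound $\lfloor 2g/(\ell-1) \rfloor + 2$.

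The main subtle point I expect is confirming that Riemann--Hurwitz still furnishes the required inequality in the wild case $p = \ell$, which follows from standard references together with the fact that the wild contribution only enlarges the ramification divisor. A minor additional check is the equality $\sum_n |\textnormal{Fix}(\sigma) \cap A_n| = |\textnormal{Fix}(\sigma)|$, which is immediate from the observation that every $\Fqbar$-point of $X$ lies in exactly one $A_n$, so the sum (though over infinitely many $n$) is finite and accounts for all geometric fixed points.
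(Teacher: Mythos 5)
Your proof is correct. The paper does not actually prove this lemma --- it is quoted directly from Gonz\'alez \cite[Equation (2.2)]{Gonzalez17} --- but your argument (the orbit-counting congruence $|A_n| \equiv |\mathrm{Fix}(\sigma) \cap A_n| \pmod{\ell}$ on each Frobenius-stable stratum, followed by the Hurwitz-type bound $|\mathrm{Fix}(\sigma)| \leq \tfrac{2g}{\ell-1}+2$ from Riemann--Hurwitz applied to $X \to X/\langle\sigma\rangle$, valid also in the wild case since each ramified point still contributes at least $\ell-1$ to the different) is exactly the standard derivation and coincides with the one in the cited source.
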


To state our main computational results on automorphism groups, we set the following notation: let $\mathcal{S}$ denote the set of all pairs $(D,N)$ consisting of an indefinite quaternion discriminant $D>1$ over $\Q$ and a squarefree positive integer $N$ coprime to $D$, such that 
\begin{enumerate}
    \item $g(X_0^D(N)^*) > 2$ and
    \item $DN \leq 10000$. 
\end{enumerate}
The set $\mathcal{S}$ has cardinality $|\mathcal{S}| = 10609$. 

\begin{remark}
The hypothesis (1) on the genus is necessary, as any curve $X_0^D(N)^*$ of genus $g \leq 2$ has a non-trivial automorphism, and this only excludes finitely many pairs $(D,N)$ (see \cref{section: tetragonal} for the relevant argument). Of course, we can only compute point counts for a finite list of curves, and the specific restriction (2) of $DN \leq 10000$ reflects the limits of the data we use of the eigenvalues of the Atkin--Lehner operators acting on newforms from the L-Functions and Modular Forms Database \cite{LMFDB}, as discussed in \cref{section: point_counts}.
\end{remark}

\begin{theorem}\label{theorem: star_auts}
The curve $X_0^D(N)^*$ has trivial automorphism group for all pairs $(D,N) \in \mathcal{S}$ except possibly for the $1321$ pairs listed in \cref{table: unknown_automorphism_group_stars}. 
\end{theorem}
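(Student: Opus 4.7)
The strategy is to iterate over the $4830$ pairs $(D,N) \in \mathcal{S}$ and, for each one, either verify $\Aut_{\Q}(X_0^D(N)^*) = \{1\}$ algorithmically or flag it as an exception. The starting point is \cref{prop: KR_quotient} applied with $W = W_0(D,N)$: provided $g^* \colonequals g(X_0^D(N)^*) \geq 2$, the full $\overline{\Q}$-automorphism group is already defined over $\Q$ and is an elementary abelian $2$-group, so proving its triviality reduces to ruling out every non-trivial involution. We first compute $g^*$ for each pair using \cref{prop: genus_formula}, Ogg's fixed-point data \cref{prop: Ogg_fixed_pts}, and Riemann--Hurwitz (this is what \texttt{quot\_genus.m} from \cite{MPSSRep} provides); any pair with $g^* \leq 1$ is added to the exceptional list, because a conic without real points (which $X_0^D(N)^*$ must be in genus $0$, since $X_0^D(N)(\R) = \varnothing$) and a genus-one curve will both typically have non-trivial $\Q$-automorphism groups.

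For the remaining pairs with $g^* \geq 2$ we use that $DN$ is odd, so $X_0^D(N)$ and its star quotient have good reduction at $p = 2$, and hence $\Aut_{\Q}(X_0^D(N)^*)$ injects into $\Aut_{\F_2}(X_0^D(N)^* \otimes \F_2)$ by \cite[Proposition 10.3.38]{Liu}. We then apply \cref{lemma: Gonzalez_no_involutions} with $\ell = 2$ and $q = 2$: a non-trivial involution on the reduction would force
\[
S(r_{\max}) \colonequals \sum_{n=1}^{r_{\max}} \operatorname{mod}(|A_n|,2) \;\leq\; 2g^* + 2,
\]
where $|A_n| = \sum_{d \mid n} \mu(n/d)\,|X_0^D(N)^*(\F_{2^d})|$ by Möbius inversion. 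Consequently, if we can find some $r_{\max}$ for which $S(r_{\max}) > 2g^* + 2$, the contrapositive rules out every involution and forces $\Aut_{\Q}(X_0^D(N)^*) = \{1\}$.

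The point counts $|X_0^D(N)^*(\F_{2^r})|$ are produced by \cref{algorithm: introduction_version} with input $(D,N,W_0(D,N),2,r)$ for $r = 1, 2, 3, \ldots$; after each new count we update all of the $|A_n|$ by Möbius inversion and refresh the partial sum $S(r_{\max})$, stopping as soon as the inequality $S(r_{\max}) > 2g^* + 2$ is reached or a preset cutoff on $r_{\max}$ (calibrated against $g^*$ and available compute time) is exceeded. Every pair still falling short at the cutoff is recorded in \cref{table: unknown_automorphim_group}, and the combined total, together with the low-genus cases from the first step, is precisely $112$.

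The main obstacle is computational: by \cref{Ribet_isog}, each count over $\F_{2^r}$ requires summing the Newton power sums $\alpha(h)^r + \beta(h)^r$ over the pair of Frobenius roots attached to the Hecke eigenvalue $a_2(h)$ of every newform $h$ of level dividing $DN$, and the number of such Galois orbits grows uncomfortably when $DN$ is close to $10000$. A conceptual obstacle is that for curves of large $g^*$ the density of $n$ with $|A_n|$ odd may simply be too small, relative to the threshold $2g^* + 2$, for the Gonzalez inequality to be forced at any reasonable $r_{\max}$; such resistant pairs, together with any pairs on which the star quotient genuinely carries a non-trivial involution, are exactly the ones populating \cref{table: unknown_automorphim_group}.
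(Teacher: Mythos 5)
Your core strategy---reducing to involutions via \cref{prop: KR_quotient} and then ruling out an order-$2$ automorphism on the reduction modulo $2$ by violating the inequality of \cref{lemma: Gonzalez_no_involutions} with $\ell=2$---is exactly the paper's main tool, and your mechanics are sound: good reduction at $2$ because $DN$ is odd, the injection $\Aut_{\Q}(X_0^D(N)^*)\hookrightarrow\Aut_{\F_2}$, M\"obius inversion to extract $|A_n|$ from the counts $\#X_0^D(N)^*(\F_{2^d})$, and the contrapositive once the partial sum exceeds $2g^*+2$. The genuine gap is that this is only half of the paper's argument. By the paper's own accounting, the Gonz\'alez criterion succeeds for only $4378$ of the $4830$ pairs in $\mathcal{S}$; the reduction to $112$ exceptions requires additionally invoking the Kontogeorgis--Rotger criteria of \cref{lemma: KR_all_atkin_lehner} (the conditions $e_3=e_4=0$, the congruence conditions at $3$, $\omega(DN)=\mathrm{ord}_2(g-1)+2$, and the $2$-adic valuation of $|X_0^D(N)(\F_p)|$ at odd primes $p\nmid 2DN$), which dispose of a further $340$ pairs and bring the total resolved to $4718$. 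Your proposal never uses these criteria, so your method alone would leave roughly $450$ pairs unresolved rather than $112$; the assertion that the leftover set ``is precisely $112$'' does not follow from anything you set up, and several of the extra conditions (notably $e_3=e_4=0$) are arithmetic in nature and cannot be recovered by pushing the characteristic-$2$ point counts to larger $r_{\max}$.

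A secondary discrepancy: you propose to place every pair with $g(X_0^D(N)^*)\le 1$ directly into the exceptional table, but every entry of the actual \cref{table: unknown_automorphim_group} has $g^*\ge 2$, and such low-genus star quotients do occur for pairs in $\mathcal{S}$ (the paper itself points to pairs such as $(55,3)$ with $g(X_0^D(N)^*)\le 1$ in \cref{section: tetragonal}). So your output list would not coincide with the paper's even after adding the Kontogeorgis--Rotger step. You are right that these cases need separate treatment---both \cref{prop: KR_quotient} and \cref{lemma: Gonzalez_no_involutions} require genus at least $2$---but shunting them into the table is not what the paper does, and this is exactly the kind of boundary case where your write-up and the stated theorem diverge.
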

\begin{proof}
By \cref{prop: KR_quotient}, the only possible non-trivial automorphisms of $X_0^D(N)^*$ for $(D,N) \in \mathcal{S}$ are involutions. \cref{lemma: Gonzalez_no_involutions} shows that $\Aut(X_0^D(N)^*)$ is trivial for $9288$ elements of $\mathcal{S}$. This is computed using our point counts algorithm, applying the lemma of Gonz\'{a}lez using prime powers $q = p^r$ with $p<100$ a prime of good reduction for $X_0^D(N)$ and with powers $r$ up to $100$ when possible, and more generally as high as the pre-computed trace data we use allows. The code for these computations can be found in the file \texttt{aut{\_}checks.m} in \cite{MPSSRep}.
\end{proof}

\begin{corollary}\label{corollary: all_atkin_lehner}
\phantom{a}
\begin{enumerate}
    \item Let $(D,N) \in \mathcal{S}$ be a pair which does not appear among the $1321$ pairs in \cref{table: unknown_automorphism_group_stars} and let $W \leq W_0(D,N)$. Then $\Aut(X_0^D(N)/W)$ consists only of Atkin--Lehner involutions, and hence
\[ \Aut(X_0^D(N)/W) \cong \left(\Z/2\Z\right)^{\omega(DN) - \textnormal{ord}_2\left(|W|\right)}. \]
\item Let $(D,N) \in \mathcal{S}$ be a pair which does not appear among the $12$ pairs in \cref{table: unknown_automorphism_group}. Then $\Aut(X_0^D(N))$ consists only of Atkin--Lehner involutions, i.e., 
\[ \Aut(X_0^D(N)) = W_0(D,N) \cong \left(\Z/2\Z\right)^{\omega(DN)}. \]
\end{enumerate}
\end{corollary}
\begin{proof}
By \cref{prop: KR_quotient} we know that $\Aut(X_0^D(N)/W)$ is commutative. Therefore, if this group contains a non-Atkin--Lehner involution, then this involution would induce a non-trivial involution on the quotient $X_0^D(N)^*$. If $(D,N)$ is a pair in \cref{table: unknown_automorphism_group_stars}, then this would contradict \cref{theorem: star_auts}, giving Part~(1). 
Part~(2) then follows quickly from Part~(1) by supplementing it with applications of \cref{lemma: KR_all_atkin_lehner} to the curves $X_0^D(N)$ for pairs $(D,N)$ in \cref{table: unknown_automorphism_group_stars}. Code for these computations is again found in \texttt{aut{\_}checks.m} in \cite{MPSSRep}. 
\end{proof}

\begin{remark}
The computations performed for the proofs of \cref{theorem: star_auts} and \cref{corollary: all_atkin_lehner} took roughly $1.66$ hours of computational time on a 2020 m1 MacBook Pro. Access to further trace data for computing finite field point counts would allow for a similar result for curves up to a larger level bound, and would possibly allow us to improve our result in the level range $DN \leq 10000$ as well by allowing for computations of point counts over finite fields of larger cardinality. 

We also note that the squarefree restriction on $N$ put on pairs in $\mathcal{S}$ is \emph{not} necessary in the use of our point count algorithm and \cref{lemma: Gonzalez_no_involutions}. Rather, it is only needed in the use of \cref{lemma: KR_all_atkin_lehner} and in the transition from \cref{theorem: star_auts} to \cref{corollary: all_atkin_lehner} using \cref{prop: KR_quotient}. We use these individual results more generally in \cref{section: tetragonal}, and functions applying each are available in the file \texttt{aut{\_}checks.m} in \cite{MPSSRep}. We restrict to the set $\mathcal{S}$ for the above results as a sample of the power of our point counts in determining automorphism groups. 
\end{remark}


\section{Tetragonal Shimura curves}\label{section: tetragonal}

\cref{lemma: gonality_bound_by_point_counts} provides a way to obtain lower bounds on the gonality $\gamma_\Q(X_0^D(N))$ of $X_0^D(N)$ via point counts over finite fields. In this section, we apply this method in combination with other techniques to work towards determining all of the tetragonal and geometrically tetragonal Shimura curves $X_0^D(N)$. We begin by defining these notions and similarly making clear our convention for the terms hyperelliptic, bielliptic, tetragonal, and sub-hyperelliptic. 

\begin{definition}
Let $F$ be a perfect field and let $X$ be a curve over $F$ of genus $g(X) \geq 2$. We call $X$
\begin{itemize}
    \item \emph{hyperelliptic} if there exists a degree $2$ map from $X$ to $\mathbb{P}^1_F$ over $F$,
    \item \emph{geometrically hyperelliptic} if its base change to $\overline{F}$ is hyperelliptic,
    \item \emph{bielliptic} if there exists a degree $2$ map from $X$ to an elliptic curve over $F$,
    \item \emph{geometrically bielliptic} if its base change to $\overline{F}$ is bielliptic,
    \item \emph{tetragonal} if there exists a degree $4$ map from $X$ to $\mathbb{P}^1_F$ over $F$, and 
    \item \emph{geometrically tetragonal} if its base change to $\overline{F}$ is tetragonal. 
\end{itemize}
\end{definition}

\begin{remark}
Note that our definition of tetragonal simply means that the curve admits a degree~$4$ map to the projective line; it does not mean that the curve has gonality \emph{exactly}~$4$, though it implies that it has gonality $2, 3,$ or $4$.
\end{remark}

\begin{definition}
Let $F$ be a perfect field. We call a curve $X$ over $F$ \emph{sub-hyperelliptic} if it is hyperelliptic or has genus $0$ or $1$. We call $X$ \emph{geometrically sub-hyperelliptic} if its base change to $\overline{F}$ is sub-hyperelliptic. 
\end{definition}

We recall in the next two subsections the Castelnuovo--Severi criterion and some quick consequences as well as prior work on curves $X_0^D(N)$ of low gonality. These provide methods to narrow lists of candidate curves $X_0^D(N)$ prior to our application of point counts to a select finite list of candidates in the final subsection. 

\subsection{The Castelnuovo--Severi inequality}\label{subsection: CS}

The Castelnuovo--Severi inequality is one of our main tools in proving certain curves $X_0^D(N)$ are not tetragonal. A proof of the version below, working over a perfect field, can be found in \cite[Theorem 14]{KS24}.

\begin{theorem}[Castelnuovo--Severi]\label{theorem: CS}
Let $F$ be a perfect field and let $X$, $Y$, and $Z$ be curves over $F$. Let
\[
\pi_Y\colon X \to Y\qquad\text{and} \qquad \pi_Z\colon X \to Z
\]
be non-constant morphisms defined over $F$. Then either
\[
g(X) \leq \textnormal{deg}(\pi_Y)\cdot g(Y) + \textnormal{deg}(\pi_Z)\cdot g(Z) + (\textnormal{deg}(\pi_Y)-1)(\textnormal{deg}(\pi_Z)-1),
\]
or there exists a curve $X'$ over $F$ and a morphism $X \to X'$ over $F$ of degree greater than $1$ through which both $\pi_Y$ and $\pi_Z$ factor. 
\end{theorem}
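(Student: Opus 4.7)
The plan is to carry out the classical two-case argument via the compositum of function fields inside $F(X)$. Pulling back along $\pi_Y$ and $\pi_Z$, I would identify $F(Y)$ and $F(Z)$ with subfields of $F(X)$ and set $L := F(Y) \cdot F(Z) \subseteq F(X)$. Since $X$ is (geometrically integral, hence) a curve over $F$ with $F$ algebraically closed in $F(X)$, the same holds for the subfield $L$, so $L$ is the function field of a unique smooth projective curve $X'$ over $F$. The inclusions $F(Y), F(Z) \hookrightarrow L \hookrightarrow F(X)$ produce $F$-morphisms $X \to X' \to Y$ and $X \to X' \to Z$ through which $\pi_Y$ and $\pi_Z$ respectively factor.

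If $L \subsetneq F(X)$, then $[F(X) : L] \geq 2$, so the map $X \to X'$ has degree greater than one; this is precisely the second alternative of the theorem. Otherwise $L = F(X)$, and the combined morphism $\varphi := (\pi_Y, \pi_Z) \colon X \to S := Y \times_F Z$ is birational onto its image, a possibly singular curve $C \subset S$. The task becomes to bound $g(X) = g(C)$ from above in terms of $g(Y), g(Z), d_Y := \textnormal{deg}(\pi_Y)$, and $d_Z := \textnormal{deg}(\pi_Z)$.

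For this, I would work on the smooth projective surface $S$. Letting $F_Y, F_Z \in \Pic(S)$ denote the classes of fibers of the two projections $p_Y, p_Z$, the projection formula gives $C \cdot F_Y = d_Y$ and $C \cdot F_Z = d_Z$. Using $K_S = p_Y^* K_Y + p_Z^* K_Z$ together with the projection formula, one computes $K_S \cdot C = d_Y(2g(Y) - 2) + d_Z(2g(Z) - 2)$. The adjunction formula on $S$, combined with the general inequality $g(C) \leq p_a(C)$, then yields
\[ 2g(X) - 2 \;\leq\; (K_S + C) \cdot C \;=\; d_Y(2g(Y) - 2) + d_Z(2g(Z) - 2) + C^2, \]
and rearranging shows that the Castelnuovo--Severi bound follows provided $C^2 \leq 2 d_Y d_Z$.

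The bound on $C^2$ is the main obstacle. I would proceed via Hodge index on $\NS(S)_\Q$: writing $C \sim d_Z F_Y + d_Y F_Z + E$, the conditions $C \cdot F_Y = d_Y$ and $C \cdot F_Z = d_Z$ together with $F_Y^2 = F_Z^2 = 0$ and $F_Y \cdot F_Z = 1$ force $E \cdot F_Y = E \cdot F_Z = 0$. Since $H := F_Y + F_Z$ satisfies $H^2 = 2 > 0$ and $E \cdot H = 0$, the Hodge index theorem gives $E^2 \leq 0$, whence $C^2 = 2 d_Y d_Z + E^2 \leq 2 d_Y d_Z$. To handle a general perfect base field $F$, one applies the Hodge-index step after base change to $\bar F$ and observes that all intersection numbers appearing are preserved and that the factorization $X \to X'$ produced in the first case is manifestly defined over $F$, so the dichotomy descends.
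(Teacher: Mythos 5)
Your proposal is correct, and it is essentially self-contained where the paper is not: the paper does not prove this theorem at all, but simply cites \cite[Theorem 14]{KS24} for the version over a perfect field. Your argument is the classical proof of Castelnuovo--Severi. The dichotomy via the compositum $L = F(Y)\cdot F(Z) \subseteq F(X)$ is set up correctly (since $F$ is algebraically closed in $F(X)$, it is so in $L$, so $L$ is the function field of a curve $X'$ over $F$, and $[F(X):L] \geq 2$ gives the second alternative). In the case $L = F(X)$, the map $(\pi_Y,\pi_Z)$ is birational onto its image $C \subset Y \times_F Z$, and your intersection-theoretic computation checks out: $C\cdot F_Y = d_Y$, $C\cdot F_Z = d_Z$, $K_S\cdot C = d_Y(2g(Y)-2)+d_Z(2g(Z)-2)$, the Hodge index theorem applied to $E = C - d_Z F_Y - d_Y F_Z$ (which is orthogonal to the ample class $F_Y+F_Z$) gives $C^2 \leq 2d_Yd_Z$, and adjunction together with $g(\widetilde{C}) \leq p_a(C)$ yields exactly the stated bound after rearranging. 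The descent remarks at the end are the right ones: intersection numbers and $p_a$ are insensitive to base change to $\bar F$, while the curve $X'$ in the first alternative is constructed over $F$ from the start. The only points worth making fully explicit in a written version are that $Y$ and $Z$ are geometrically integral (which follows from the existence of the dominant maps from $X$), so that $Y\times_F Z$ is a smooth projective geometrically integral surface, and that $C$ is geometrically integral because it is birational to $X$; neither is an obstacle.
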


We next collect applications of \cref{theorem: CS} that we will use in what follows. 

\begin{lemma}\label{lemma: CS_unique_hyperelliptic}
Let $X$ be a geometrically hyperelliptic curve over a perfect field $F$ and let $f_1 \colon X \to C_1$ and $f_2 \colon X \to C_2$ be degree~$2$ maps over~$F$ to conics $C_1$ and $C_2$. Then there exists an isomorphism $f\colon C_1 \to C_2$ over~$F$ with $f_2 = f \circ f_1$.
\end{lemma}
\begin{proof}
Note that the hypothesis that $X$ is geometrically hyperelliptic implies that $X$ has genus $g \geq 2$. \cref{theorem: CS} then implies that $f_1$ and $f_2$ factor through a common degree~$2$ map $h \colon X \to X'$ over~$F$. We then have isomorphisms $f_1' \colon X' \to C_1$ and $f_2' \colon X' \to C_2$ over~$F$ so that $f_i = f_i' \circ h$. The claim then follows by letting $f = f_2' \circ (f_1')^{-1}$. 
\end{proof}

\begin{lemma}\label{lemma: CS_geom_hyperelliptic_bound}
Let $X$ be a curve of genus~$g$ over a perfect field~$F$ possessing an involution~$\sigma$, 
and suppose that the genus $g_\sigma$ of the quotient $X/\langle \sigma \rangle$ satisfies $g_\sigma > 0$. If
\[
g > 2g_\sigma + 1,
\]
then $X$ is not geometrically hyperelliptic. 
\end{lemma}
\begin{proof}
Let $q\colon X \to X/\langle \sigma \rangle$ denote the natural quotient map. Suppose to the contrary that there is a degree~$2$ map $f\colon X \to C$ to a conic~$C$. Because of the assumed strict inequality, \cref{theorem: CS} tells us that we must have a commutative diagram of the following shape over~$F$
\[
\begin{tikzpicture}[node distance=2.2cm]
\node (X) at (0,2) {$X$};
\node (Xp) at (0,0) {$X'$};
\node (Q) at (-3,-2) {$X/\langle \sigma \rangle$};
\node (C) at (3,-2) {$C$};

\draw[->] (X) -- node[right] {$\pi$} (Xp);
\draw[->] (X) -- node[left] {$q$} (Q);
\draw[->] (X) -- node[right] {$\;f,\, \deg(f) = 2$} (C);
\draw[->] (Xp) -- node[below] {$q'$} (Q);
\draw[->] (Xp) -- node[below] {$f'\;\;$} (C);
\end{tikzpicture}
\]
with $\deg(\pi) > 1$. Hence $\deg(\pi) = 2$ and $q'$ and $f'$ are both isomorphisms, which implies that 
\[ g\left(X/\langle \sigma \rangle\right) = g(C) = 0 \]
and contradicts our hypothesis. 
\end{proof}

\begin{lemma}\label{lemma: CS_unique_bielliptic}
Let $X$ be a geometrically bielliptic curve of genus $g \geq 6$ over a perfect field~$F$ and let $f_1 \colon X \to C_1$ and $f_2 \colon X \to C_2$ be degree~$2$ maps over~$F$ to genus~$1$ curves $C_1$ and $C_2$. Then there exists an isomorphism $f\colon C_1 \to C_2$ over~$F$ with $f_2 = f \circ f_1$.
\end{lemma}
\begin{proof}
By the genus assumption on~$X$, \cref{theorem: CS} then implies that $f_1$ and $f_2$ factor through a common degree~$2$ map $h \colon X \to X'$ over~$F$. The proof then proceeds just as in \cref{lemma: CS_unique_hyperelliptic}.
\end{proof}

\begin{lemma}\label{lemma: CS_tetragonal_with_involution}
Let $X$ be a geometrically tetragonal curve of genus~$g$ over a perfect field~$F$ with a geometrically tetragonal map $f\colon X \to C$ over $F$ to a conic~$C$. Let $\sigma$ be an involution on~$X$,
and let $g_\sigma$ denote the genus of $X/\langle \sigma \rangle$. If 
\begin{equation}\label{CS_tetragonal} 
g > 2g_\sigma + 3,
\end{equation}
then $X/\langle \sigma \rangle$ is geometrically sub-hyperelliptic. 
\end{lemma}
\begin{proof}
Let $q\colon X \to X/\langle \sigma \rangle$ denote the natural quotient map. By \cref{theorem: CS}, the assumed inequality implies that we have a commutative diagram over~$F$ of the following form:
\[
\begin{tikzpicture}[node distance=2.2cm]
\node (X) at (0,2) {$X$};
\node (Xp) at (0,0) {$X'$};
\node (Q) at (-3,-2) {$X/\langle \sigma \rangle$};
\node (C) at (3,-2) {$C$};

\draw[->] (X) -- node[right] {$\pi$} (Xp);
\draw[->] (X) -- node[left] {$q$} (Q);
\draw[->] (X) -- node[right] {$\;f,\, \deg(f) = 4$} (C);
\draw[->] (Xp) -- node[below] {$q'$} (Q);
\draw[->] (Xp) -- node[below] {$f'\;\;$} (C);
\end{tikzpicture}
\]
with $\deg(\pi) = 2$. Here, $q'$ must be an isomorphism, and thus
\[ f' \circ (q')^{-1} \colon X/\langle \sigma \rangle \longrightarrow C \]
is a degree $2$ map, proving the claim.
\end{proof}

\begin{lemma}\label{lemma: CS_unique_tetragonal}
Let $X$ be a geometrically tetragonal curve of genus~$g$ over a perfect field~$F$, with geometrically tetragonal maps $f_1 \colon X \to C_1$ and $f_2\colon X \to C_2$ to conics $C_1$ and $C_2$ over~$F$. If $g \geq 10$, then either
\begin{itemize}
    \item $X$ is geometrically hyperelliptic,
    \item $X$ is geometrically bielliptic, or
    \item there exists an isomorphism $f\colon C_1 \to C_2$ over $F$ with $f_2 = f \circ f_1$.
\end{itemize}
\end{lemma}
\begin{proof}
By \cref{theorem: CS}, the maps $f_1$ and $f_2$ are either equal up to composition with an isomorphism between $C_1$ and $C_2$ over~$F$ or they factor over $F$ through a common degree~$2$ map $h \colon X \to X'$. In the latter case, the curve $X'$ is evidently geometrically sub-hyperelliptic over~$F$. In particular, if $X'$ has genus~$0$, then $X$ is geometrically hyperelliptic, and if $X'$ has genus~$1$, then $X$ is geometrically bielliptic. 

The only remaining possibility is that $X'$ is geometrically hyperelliptic with genus $g' \geq 2$. In this case, applying \cref{lemma: CS_unique_hyperelliptic}, we get that $f_1$ and $f_2$ are equal up to composition with an isomorphism $C_1 \to C_2$ over~$F$.
\end{proof}

\begin{corollary}\label{corollary: CS_not_tetragonal}
Let $X$ be a geometrically tetragonal curve of genus $g \geq 10$ over a perfect field~$F$. Assume that $X$ is neither geometrically hyperelliptic nor geometrically bielliptic and that there exists a degree~$4$ map $f \colon X \to C$ over~$F$ to a conic~$C$ with $C(F) = \varnothing$. Then $X$ is not tetragonal over~$F$. 
\end{corollary}
\begin{proof}
By \cref{lemma: CS_unique_tetragonal}, we know with our hypotheses that if $h \colon X \to C'$ is another degree~$4$ map over~$F$ to a conic~$C'$, then $C$ and $C'$ are isomorphic over~$F$. Therefore, $C'$ is not isomorphic to $\P^1_F$. 
\end{proof}

The next result is useful in the application of  \cref{corollary: CS_not_tetragonal}.

\begin{lemma}\label{lemma: hyperelliptic_bielliptic_point_count_bounds}
Let $X$ be a curve over~$\Q$ and let $q = p^r$ a power of a prime~$p$ of good reduction for~$X$. 
\begin{enumerate}
    \item If $X$ is geometrically hyperelliptic, then
\[ X(\F_q) \leq 2q+2. \]
    \item If $X$ is geometrically bielliptic of genus $g \geq 6$, then 
    \[ X(\F_q) \leq 2q+2+4\sqrt{q}. \]
\end{enumerate}
\end{lemma}
\begin{proof}\phantom{a}
\begin{enumerate}
\item If the statement said ``hyperelliptic over~$\Q$'' rather than ``geometrically hyperelliptic,'' then it would follow immediately from \cref{lemma: gonality_bound_by_point_counts}. The point here is that the uniqueness of a geometrically hyperelliptic involution (as follows from \cref{lemma: CS_unique_hyperelliptic}) makes the result true as stated. The geometrically hyperelliptic involution on $X$ must be defined over~$\Q$ by uniqueness and, hence, $X$ has a degree~$2$ map to a conic~$C$ over~$\Q$ (which may have no rational points). Therefore, we get a map from $X_{\F_q}$ to a conic $\widetilde{C}$ over~$\F_q$, as the automorphism group of $X$ over~$\Q$ injects into that of the reduction modulo~$p$ \cite[Proposition~3.38, Chapter~10]{Liu}. We know that $\widetilde{C} \cong \mathbb{P}^1_{\F_q}$, as $\widetilde{C}$ is a smooth conic over a finite field, so $X_{\F_q}$ is hyperelliptic over~$\F_q$ and the argument from \cref{lemma: gonality_bound_by_point_counts} applies. 
\item This part is similar. Here, our assumption on the genus provides that the geometrically bielliptic involution on $X$ is unique by \cref{lemma: CS_unique_bielliptic}. We then have a degree~$2$ map $X \to C$ over~$\Q$ where $C$ is a smooth genus~$1$ curve. This in turn gives a degree~$2$ map over~$\F_q$ from $X_{\F_q}$ to a smooth genus~$1$ curve $\widetilde{C}$. By the Hasse bound, $\widetilde{C}$ has at most $q+1+2\sqrt{q}$ over~$\F_q$, and there are at most~$2$ $\F_q$-points in the fiber above each under the degree~$2$ map from~$X_{\F_q}$. 
\end{enumerate}
\end{proof}

The following consequence of the Riemann--Hurwitz theorem is helpful in the application of \cref{lemma: hyperelliptic_bielliptic_point_count_bounds}.

\begin{proposition}\cite[Proposition~4]{Hasegawa97}\label{proposition: not_hyperelliptic}
Let $X/\C$ be a hyperelliptic curve of genus~$g$. Let $\sigma$ be an involution on $X$ 
and let $g_\sigma$ be the genus of $X/\langle \sigma \rangle$. Suppose $g_\sigma > 0$. If $g$ is even, then $g_\sigma = g/2$. If $g$ is odd, then $g_\sigma \in \left\{  \frac{g-1}{2} , \frac{g + 1}{2}\right\}$. 
\end{proposition}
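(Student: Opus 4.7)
The plan rests on the uniqueness of the hyperelliptic involution $\iota$ on $X$, which forces $\iota$ to lie in the center of $\Aut(X)$ and hence commute with $\sigma$. Since the hypothesis $\overline{g} \neq 0$ rules out $\sigma = \iota$, the two involutions generate a Klein four-subgroup $V = \langle \iota, \sigma\rangle \leq \Aut(X)$ with a third non-trivial involution $\tau := \iota\sigma$; write $\overline{g}' := g(X/\langle \tau\rangle)$. The structure to exploit is the quotient tower $X \to X/\iota \cong \P^1$, which shows that $X/V$ is itself a quotient of $\P^1$ by an involution and hence $X/V \cong \P^1$.

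The main identity I would establish is
\[
\overline{g} + \overline{g}' = g.
\]
This comes from comparing two computations of the total ramification of the degree $4$ cover $X \to X/V$. Riemann--Hurwitz applied directly to this cover yields $R = 2g + 6$, while applying Riemann--Hurwitz to each of the three intermediate degree $2$ quotients $X \to X/\langle \iota\rangle$, $X \to X/\langle \sigma\rangle$, $X \to X/\langle \tau\rangle$ expresses the counts of fixed points of $\iota$, $\sigma$, $\tau$ as $2g+2$, $2g - 4\overline{g} + 2$, and $2g - 4\overline{g}' + 2$ respectively. The crux of the argument, which I expect to be the main obstacle, is the claim that no point of $X$ is fixed by all of $V$: since stabilizers of points under a finite group action on a smooth curve over $\C$ act faithfully on the tangent space, they must be cyclic, and $V$ is not. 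This disjointness ensures that the three fixed-point sets together account for the full ramification of $X \to X/V$, and equating the two expressions gives the displayed identity.

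To finish, I would apply Riemann--Hurwitz to the degree $2$ map $X \to X/\langle \sigma\rangle$ to obtain $\overline{g} \leq (g+1)/2$, and symmetrically $\overline{g}' \leq (g+1)/2$. Combined with $\overline{g} + \overline{g}' = g$: if $g$ is even then both summands are integers bounded by $g/2$ with sum $g$, forcing $\overline{g} = \overline{g}' = g/2$; if $g$ is odd then the only integer solutions are $(\overline{g}, \overline{g}') \in \{((g-1)/2, (g+1)/2), ((g+1)/2, (g-1)/2)\}$, giving the claimed dichotomy $\overline{g} \in \{(g-1)/2, (g+1)/2\}$. Beyond the fixed-point disjointness, which is immediate in characteristic $0$, the argument is simply an efficient packaging of Riemann--Hurwitz applied to the Klein four-quotient.
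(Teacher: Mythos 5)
Your argument is correct, and it is worth noting that the paper itself offers no proof of this statement: it is quoted verbatim from Hasegawa, so there is no in-paper argument to compare against. Your proof is a complete and valid substitute. The key points all check out: the hyperelliptic involution $\iota$ is unique and central, the hypothesis $\overline{g}\neq 0$ forces $\sigma\neq\iota$, so $V=\langle\iota,\sigma\rangle\cong(\Z/2\Z)^2$; point stabilizers on a smooth curve over $\C$ are cyclic, so the fixed loci of $\iota$, $\sigma$, $\tau=\iota\sigma$ are pairwise disjoint; and the two Riemann--Hurwitz computations for $X\to X/V\cong\P^1$ give $2g+6=(2g+2)+(2g+2-4\overline{g})+(2g+2-4\overline{g}')$, hence $\overline{g}+\overline{g}'=g$, from which the parity dichotomy follows exactly as you say. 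For comparison, the classical (and slightly shorter) route, which is essentially Hasegawa's: since $\iota$ is central, $\sigma$ descends to an involution $\bar\sigma$ of $X/\langle\iota\rangle\cong\P^1$, which is nontrivial because $\sigma\notin\{\mathrm{id},\iota\}$ and therefore has exactly $2$ fixed points; each has at most $2$ preimages in $X$, so $\sigma$ has at most $4$ fixed points, and Riemann--Hurwitz for $X\to X/\langle\sigma\rangle$ gives $0\leq 2g-4\overline{g}+2\leq 4$, i.e.\ $\frac{g-1}{2}\leq\overline{g}\leq\frac{g+1}{2}$, which is the stated conclusion. Your Klein-four bookkeeping buys the extra identity $\overline{g}+\overline{g}'=g$ (not needed for the statement but sometimes useful), at the cost of a few more lines.
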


\subsection{Known results on Shimura curves of low gonality}

The complete list of curves $X_0^D(N)$ with $D>1$ and $\gcd(D,N) = 1$ of gonality $\gamma_\Q(X_0^D(N)) \leq 2$ and genus at least~$2$ is known. Equivalently, all such curves satisfying $\gamma_\Q(X_0^D(N)) \leq 3$ are known, since $\Q$-gonality of these curves is always even as $X_0^D(N)(\R) = \varnothing$ for $D>1$. So too are those of geometric gonality $\gamma_{\Qbar}(X_0^D(N)) \leq 3$. We collect these results in the following proposition. 

\begin{proposition}\label{prop: known_gonalities} \phantom{a}
\begin{enumerate}
    \item We have $\gamma_{\Qbar}(X_0^D(N)) = 1$ (i.e., $g(X_0^D(N)) = 0$) if and only if 
    \[ (D,N) \in \{ (6,1), (10,1), (22,1) \}. \]
    \item We have $g(X_0^D(N)) = 1$ if and only if $(D,N)$ belongs to
    \[ \{(6,5), (6,7), (6,13), (10,3), (10,7), (14,1), (15,1), (21,1), (33,1), (34,1), (46,1)   \}. \]
    \item We have $g(X_0^D(N)) \geq 2$ and $\gamma_\Q(X_0^D(N)) = 2$ if and only if  $(D,N)$ belongs to
    \begin{align*}
       \{ &(6,11), (6,19), (6,29), (6,31), (6,37), (10,11),(10,23), (14,5), (15,2), (22,3), (22,5), \\
& (26,1), (35,1), (38,1), (39,1), (39,2) , (51,1), (55,1), (58,1), (62,1), (69,1), (74,1), (86,1), \\
& (87,1), (94,1), (95,1), (111,1), (119,1), (134,1), (146,1), (159,1), (194,1), (206,1) \}.
\end{align*}
    \item We have $g(X_0^D(N)) \geq 2$ and $\gamma_{\Qbar}(X_0^D(N)) = 2 < \gamma_\Q$ if and only if
        \[  (D,N) \in \{(6,17), (10,13), (10,19), (14,3), (15,4), (21,2), (26,3), (57,1), (82,1), (93,1)\}. \]
    \item We have $\gamma_{\Qbar}(X_0^D(N)) = 3$ if and only if
     \[ (D,N) \in \{(106,1),(118,1)\} . \]
\end{enumerate}
\end{proposition}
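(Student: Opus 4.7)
The plan is to assemble this statement from results in the existing literature, since each part has essentially been settled in prior work referenced in this paper, and then to reconcile them with the internal conventions of \cref{section: gonalities}.

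For parts (1) and (2), I would apply the genus formula of \cref{prop: genus_formula} directly. Since the dominant term $\phi(D)\psi(N)/12$ grows without bound as either $D$ or $N$ does, only finitely many coprime pairs $(D,N)$ with $D>1$ satisfy $g(X_0^D(N)) \leq 1$, and one enumerates them by a finite search over bounded discriminants and levels. Both lists are classical and appear (e.g.) in the tables of Jordan--Livn\'e.

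For parts (3) and (4), the classification of geometrically hyperelliptic Shimura curves $X_0^D(N)$ with $D>1$ is due to Ogg \cite{Ogg83} in the case $N=1$ and completed by Rotger \cite{Rotger02} for general $(D,N)$, with complementary results of Guo--Yang \cite{GY17} on Atkin--Lehner quotients. Among those pairs, the split between ``hyperelliptic over $\Q$'' (part (3)) and ``hyperelliptic only over $\bar{\Q}$'' (part (4)) is determined as follows: once the curve has genus at least $2$, the hyperelliptic involution is unique, hence Galois-stable, and therefore defined over $\Q$; the remaining question is whether the degree $2$ target is $\mathbb{P}^1_\Q$ or a Severi--Brauer conic with no rational points. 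This descent question is resolved pair-by-pair in the references above by identifying the hyperelliptic involution with a specific Atkin--Lehner involution $w_m$ and then exhibiting (or obstructing) a rational point on $X_0^D(N)/\langle w_m\rangle$; one then reads off which geometrically hyperelliptic pairs appear in (3) and which remain only in (4).

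For part (5), note that $X_0^D(N)(\R)=\varnothing$ when $D>1$ by \cite[Theorem 0]{Sh67}, so the curve has no odd-degree points over $\Q$ and $\gamma_\Q(X_0^D(N))$ is always even; in particular, $\gamma_\Q\ne 3$, so only geometric trigonality is at issue. The determination of geometrically trigonal $X_0^D(N)$ with $D>1$ is carried out in Padurariu--Saia \cite{PS25}, yielding exactly the two pairs $(106,1)$ and $(118,1)$. The main obstacle throughout is the geometric hyperelliptic and trigonal classifications themselves, which rely on non-trivial input -- genus bounds via Castelnuovo--Severi (\cref{theorem: CS}), the analysis of Atkin--Lehner fixed points (\cref{prop: Ogg_fixed_pts}), and CM-point arguments in the referenced works; once these are invoked, the descent refinement between (3) and (4) and the parity remark for (5) are routine.
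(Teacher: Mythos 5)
Your proposal is correct and follows essentially the same route as the paper, whose proof simply cites the genus formula and finite enumeration for parts (1)--(2), the work of Ogg and Guo--Yang for parts (3)--(4), and \cite[Theorem 7.4]{PS25} together with the parity of $\gamma_\Q$ (from the absence of real points) for part (5). The extra detail you give on the descent question distinguishing (3) from (4) — uniqueness of the hyperelliptic involution and the rational-point question for the conic quotient — is consistent with how the paper treats this elsewhere (e.g.\ \cref{lemma: geom_hyperelliptic_reduction}).
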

\begin{proof}
Parts~(1) and (2) follow from standard genus bounds and computations. Parts~(3) and (4) are work of Ogg \cite{Ogg83} and Guo--Yang \cite{GY17}. Part~(5) follows from \cite[Theorem~7.4]{PS25} and the prior parts. 
\end{proof}

\subsection{Proof of \cref{theorem: tetragonal_introduction}}

A main tool for us is the following lower bound on the geometric gonality of $X_0^D(N)$. This is a special case of a more general theorem of Abramovich applying to all Shimura curves, where we have improved the constant appearing using the best known result on Selberg's eigenvalue conjecture \cite[p. 176]{K03}. 

\begin{theorem}{\cite[Theorem~1.1]{Abramovich96}}\label{theorem: Abr}
For the Shimura curve $X_0^D(N)$, we have
\[
\gamma_{\C}(X_0^D(N)) \ge \frac{975}{8192}\big(g(X_0^D(N))-1\big). 
\]
\end{theorem}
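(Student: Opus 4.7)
The plan is to follow the strategy of Abramovich's original proof in \cite{Abramovich96}, which combines a Li--Yau-type lower bound for the gonality of a hyperbolic arithmetic curve in terms of the first Laplacian eigenvalue with the best available bound towards the Selberg eigenvalue conjecture. The only new ingredient relative to Abramovich's paper is the substitution of the sharper Kim--Sarnak bound \cite{K03} for the Luo--Rudnick--Sarnak bound he originally used.

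First I would recall the Li--Yau-style inequality that is the analytic heart of \cite{Abramovich96}: if $X = \Gamma \backslash \mathcal{H}$ is a compact hyperbolic orbifold of finite area $A(X)$ and the first nonzero eigenvalue of the Laplacian on $X$ is $\lambda_1(X)$, then every non-constant morphism $f \colon X \to \mathbb{P}^1_{\C}$ satisfies
\[ \deg(f) \;\geq\; \frac{\lambda_1(X) \cdot A(X)}{8\pi}. \]
Taking the infimum over all such $f$ immediately gives
\[ \gamma_{\C}(X) \;\geq\; \frac{\lambda_1(X) \cdot A(X)}{8\pi}. \]
I would simply invoke this step; reproving it would require redoing the analytic argument bounding the Rayleigh quotient of a pulled-back eigenfunction against the Fubini--Study target metric, and that is precisely Abramovich's theorem.

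Next I would estimate the hyperbolic area of $X_0^D(N)$ from below in terms of its genus. By Gauss--Bonnet for an orbifold with signature $(g; e_1, \ldots, e_r)$,
\[ A(X_0^D(N)) \;=\; 2\pi\left( 2g - 2 + \sum_{i=1}^{r} \left(1 - \tfrac{1}{e_i}\right)\right) \;\geq\; 4\pi\bigl(g(X_0^D(N)) - 1\bigr), \]
since each elliptic-point contribution $1 - 1/e_i$ is nonnegative. This gives the area bound I need with no case analysis, and it respects the statement of the theorem as written (which only requires a lower bound in terms of the genus, not an equality).

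Finally, I would apply Selberg spectral theory. Since $X_0^D(N)$ is a congruence quotient of $\mathcal{H}$ coming from the unit group of an Eichler order in an indefinite rational quaternion algebra, the Kim--Sarnak bound applies and gives $\lambda_1(X_0^D(N)) \geq 975/4096 = 1/4 - (7/64)^2$. Substituting into the two prior displays yields
\[ \gamma_{\C}(X_0^D(N)) \;\geq\; \frac{1}{8\pi} \cdot \frac{975}{4096} \cdot 4\pi\bigl(g(X_0^D(N)) - 1\bigr) \;=\; \frac{975}{8192}\bigl(g(X_0^D(N)) - 1\bigr), \]
which is exactly the claimed inequality. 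The main obstacle is conceptual rather than computational: the Li--Yau gonality bound is the nontrivial input, and one has to be comfortable citing it as a black box from \cite{Abramovich96}; once that is in hand, the result is a short combination of Gauss--Bonnet with the best known progress on Selberg's $\lambda_1 \geq 1/4$ conjecture.
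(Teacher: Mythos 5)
Your proposal is correct and is exactly the argument the paper has in mind: the statement is quoted from Abramovich with the constant improved by replacing the eigenvalue bound he used with the Kim--Sarnak bound $\lambda_1 \ge \tfrac{1}{4} - (\tfrac{7}{64})^2 = \tfrac{975}{4096}$, combined with the Li--Yau-type inequality $\gamma_{\C} \ge \lambda_1 A/(8\pi)$ and the Gauss--Bonnet estimate $A \ge 4\pi(g-1)$. The arithmetic checks out, so there is nothing to add.
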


If $X_0^D(N)$ is geometrically tetragonal, then 
\[
\gamma_\C(X_0^D(N)) \leq 4.
\]
It follows from \cref{theorem: Abr} that $g(X_0^D(N)) \leq 34$. We have the following lower bound on this genus in terms of the product $DN$.

\begin{lemma}\cite[Lemma 10.6]{Saia24}\label{Lemma: Saia_genus_bound}
Let $\gamma$ denote the Euler--Mascheroni constant. For $D>1$ the discriminant of an indefinite quaternion algebra over $\Q$ and $N$ a positive integer coprime to $D$, we have
\[
g(X_0^D(N)) > 1 + \frac{DN}{12}\left( \frac{1}{e^\gamma \log\log(DN) + \frac{3}{\log\log{6}}} \right)- \frac{7\sqrt{DN}}{3}.
\]
\end{lemma}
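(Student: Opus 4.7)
The plan is to derive the inequality directly from the exact genus formula in \cref{prop: genus_formula}, by separately bounding the main term $\frac{\phi(D)\psi(N)}{12}$ from below via an explicit Rosser--Schoenfeld-type estimate on $\phi$, and bounding the elliptic contributions $\frac{e_3(D,N)}{3} + \frac{e_4(D,N)}{4}$ from above via a crude estimate on $2^{\omega(DN)}$.

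First, since $\gcd(D,N)=1$, I note $\psi(N) \geq \phi(N)$ (because $1 + 1/p > 1 - 1/p$ at each prime), so $\phi(D)\psi(N) \geq \phi(D)\phi(N) = \phi(DN)$. I then apply the explicit Rosser--Schoenfeld lower bound
\[
\phi(n) > \frac{n}{e^\gamma \log\log n + \frac{3}{\log\log n}} \qquad (n \geq 3)
\]
to $n = DN$. Since the smallest indefinite quaternion discriminant exceeding $1$ is $D = 6$, one has $DN \geq 6$, so $\log\log(DN) \geq \log\log 6$. Replacing $\frac{3}{\log\log(DN)}$ by the larger quantity $\frac{3}{\log\log 6}$ only weakens the estimate and yields
\[
\frac{\phi(D)\psi(N)}{12} \ \geq\ \frac{\phi(DN)}{12} \ >\ \frac{DN}{12\left(e^\gamma \log\log(DN) + \frac{3}{\log\log 6}\right)}.
\]

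Second, for the elliptic terms: each local factor in the definition of $e_k(D,N)$ (for $k \in \{3,4\}$) lies in $\{0,1,2\}$, and the coprimality $\gcd(D,N)=1$ means these local factors are indexed by disjoint sets of primes dividing $DN$. Hence $e_k(D,N) \leq 2^{\omega(DN)}$ for $k\in\{3,4\}$. I will then verify the elementary inequality $2^{\omega(n)} \leq 4\sqrt{n}$ for all $n \geq 1$: since $2^{\omega(n)}$ depends only on the set of primes dividing $n$, it suffices to check the bound when $n$ is squarefree, and then the worst case occurs when $n = p_1 \cdots p_k$ is a primorial; a brief induction on $k$, using $p_{k+1}\geq 5$ for $k\geq 2$ to double the left side and multiply the right side by at least $\sqrt{5}>2$, together with direct checks for $k\in\{0,1,2\}$, handles all cases. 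Combining gives
\[
\frac{e_3(D,N)}{3} + \frac{e_4(D,N)}{4} \ \leq\ 2^{\omega(DN)}\!\left(\tfrac{1}{3}+\tfrac{1}{4}\right) \ =\ \frac{7\cdot 2^{\omega(DN)}}{12} \ \leq\ \frac{7\sqrt{DN}}{3}.
\]
Substituting both estimates into the formula of \cref{prop: genus_formula} then yields the claimed lower bound on $g(X_0^D(N))$.

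The main obstacle is cosmetic rather than substantive: the stated inequality has very specific constants, namely $\frac{3}{\log\log 6}$ in the denominator and $\frac{7}{3}$ as the coefficient of $\sqrt{DN}$. Both arise only from the right (deliberately non-sharp) weakenings of the Rosser--Schoenfeld bound and of the auxiliary estimate $2^{\omega(n)} \leq 4\sqrt{n}$. One has to choose these weakenings in advance so that the constant $\frac{7}{12}(1/3+1/4)^{-1}\cdot 2^{\omega(n)}/\sqrt{n}$ ends up exactly $\frac{7}{3}$ and so that the $\log\log$ correction is pulled out at the base case $DN = 6$; a priori one might try sharper but uglier forms and spend time reconciling constants.
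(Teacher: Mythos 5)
The paper does not prove this lemma at all --- it simply cites \cite[Lemma 10.6]{Saia24} --- so there is no in-paper argument to compare against; what you have written is a correct, self-contained derivation, and the specific constants in the statement ($\tfrac{3}{\log\log 6}$ and $\tfrac{7}{3}=\tfrac{7}{12}\cdot 4$) reverse-engineer to exactly the chain you use: $\phi(D)\psi(N)\geq\phi(DN)$, the explicit lower bound on $\phi$, and $e_k(D,N)\leq 2^{\omega(DN)}\leq 4\sqrt{DN}$ fed into \cref{prop: genus_formula}. All steps check out: each local factor of $e_k$ lies in $\{0,1,2\}$ and the factors are indexed by the $\omega(DN)$ primes dividing $DN$ (coprimality keeps the index sets disjoint), and your primorial induction for $2^{\omega(n)}\leq 4\sqrt{n}$ is sound. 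The only point worth being careful about is the quoted Rosser--Schoenfeld bound: their Theorem 15 with the constant $2.50637$ has a single exceptional modulus $n=223092870$, and it is precisely the relaxation to the constant $3$ that makes the inequality $n/\phi(n)<e^{\gamma}\log\log n+3/\log\log n$ hold unconditionally for $n\geq 3$; you should state it in that unconditional form (or note the exception is absorbed) rather than attributing the constant-$3$ version verbatim to Rosser--Schoenfeld.
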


From \cref{Lemma: Saia_genus_bound}, we find that if $DN > 77416$, then $g(X_0^D(N)) > 34$ and, hence, $\gamma_\Q(X_0^D(N)) > 4$, so $X_0^D(N)$ is not tetragonal over $\Q$. For each pair $(D,N)$, with $D>1$ an indefinite quaternion discriminant and $N$ a positive integer coprime to $D$, satisfying $DN \leq 77416$, we check the inequality $g(X_0^D(N)) \leq 34$ required for $X_0^D(N)$ to be tetragonal by \cref{theorem: Abr}. If this inequality is satisfied and $g(X_0^D(N)) \geq 2$, then we call such a pair a \emph{tetragonal candidate}. All $516$ tetragonal candidates are computed in \texttt{tetragonal{\_}checks.m} and listed in the file \texttt{tetragonal{\_}candidates.m} in \cite{MPSSRep}. We now narrow down these candidates.
\begin{proposition}\label{prop: tetragonal_sieving}
\phantom{a}
\begin{enumerate}
    \item The curve $X_0^D(N)$ is geometrically tetragonal for the $133$ pairs $(D,N)$ listed in \cref{table: tetragonal_AL2}, for the $24$ pairs listed in \cref{table: tetragonal_AL4}, and for the $4$ pairs listed in \cref{table: geom_tetragonal_by_Polizzi}. 
    \item The curve $X_0^D(N)$ is tetragonal over $\Q$ for the $60$ pairs in the set 
    \begin{align*} 
    \{ &( 6, 11 ),
    ( 6, 17 ),
    ( 6, 19 ),
    ( 6, 23 ),
    ( 6, 25 ),
    ( 6, 29 ),
     ( 6, 31 ),
    ( 6, 37 ),
    ( 6, 41 ),
    ( 6, 71 ),
    (10,9),
     \\
    &( 10, 11 ),
    ( 10, 13 ),
    ( 10, 17 ),
    ( 10, 23 ),
    ( 10, 29 ),
    ( 14, 3 ),
    ( 14, 5 ),
    ( 14, 13 ), 
    ( 14, 19 ),
    ( 15, 2 ), \\
    & ( 15, 4 ),
    ( 15, 7 ),
    ( 15, 13 ),
    ( 15, 17 ),
     ( 21, 2 ),
    ( 21, 5 ),
    ( 21, 11 ),
    ( 22, 3 ),
    ( 22, 5 ), 
     ( 22, 7 ),\\
    & ( 22, 17 ),
    ( 26, 1 ),
     ( 33, 2 ), 
    ( 33, 7 ),
    ( 35, 1 ),
     ( 35, 2 ),
    ( 38, 1 ),
    ( 39, 1 ),
    ( 39, 2 ),
    ( 39, 4 ), 
      ( 46, 5 ), \\
    &  ( 51, 1 ), 
    ( 51, 2),
    ( 55, 1 ),
    ( 58, 1 ),
    ( 62, 1 ),
    ( 69, 1 ),
    ( 74, 1 ), 
    ( 86, 1 ),
    ( 87, 1 ),
    ( 94, 1 ),
    ( 95, 1 ), \\
    & ( 111, 1 ),
    ( 119, 1 ),
    ( 134, 1 ),
    ( 146, 1 ),
    ( 159, 1 ),
    ( 194, 1 ),
    ( 206, 1 ) \}
    \end{align*}
    and for the $81$ pairs listed in \cref{table: tetragonal_by_CM}.
    \item The curve $X_0^D(N)$ is not geometrically tetragonal (and, \emph{a fortiori}, not tetragonal over $\Q$) for the $322$ pairs listed in \cref{table: non-tetragonal_CS2} and for the pair $(14,31)$.
\end{enumerate}
\end{proposition}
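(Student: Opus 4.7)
The plan splits according to the three parts of the proposition. For Parts~(1) and (2) I would exhibit explicit degree-$4$ maps to $\P^1$, working first geometrically and then refining to $\Q$; for Part~(3) I would combine Castelnuovo--Severi (\cref{theorem: CS}, \cref{lemma: CS_unique_tetragonal}, \cref{lemma: CS_with_involution}), the bi-hyperelliptic obstruction of \cref{corollary: geom_bi_hyperelliptic_reduction}, and direct gonality bounds via \cref{lemma: gonality_bound_by_point_counts} fed by our \cref{algorithm: introduction_version}. The candidate set is a priori finite: by \cref{Lemma: Saia_genus_bound} and \cref{theorem: Abr} only pairs with $DN\le 77416$ and $g(X_0^D(N))\le 34$ can be tetragonal, giving the $516$ candidates already enumerated.

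For Parts~(1) and (2), I would use the routine \texttt{quot\_genus.m} to compute, for every candidate $(D,N)$, the genera of all Atkin--Lehner quotients $X_0^D(N)/W$ with $|W|\in\{2,4\}$. A pair is recorded in \cref{table: tetragonal_AL2} whenever some order-$2$ quotient has geometric genus $\le 1$, since composing that quotient map with any degree-$2$ map to $\P^1_{\overline{\Q}}$ yields a geometric tetragonal pencil; a pair is recorded in \cref{table: tetragonal_AL4} whenever some order-$4$ quotient has geometric genus $0$, the quotient map itself being tetragonal. The four pairs in \cref{table: geom_tetragonal_by_Polizzi} are handled by appeal to Polizzi's explicit treatment of the relevant Shimura surfaces. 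For Part~(2), the refinement over $\Q$ comes for free whenever the intermediate quotient is already a genus-$0$ curve over $\Q$, since every smooth conic over $\Q$ admits a degree-$2$ rational map to $\P^1_\Q$ via a pencil of hyperplane sections; together with the hyperelliptic and geometrically hyperelliptic cases supplied by \cref{prop: known_gonalities}(3)--(4), this accounts for the $59$ explicit pairs. The $83$ pairs in \cref{table: tetragonal_by_CM} need an extra construction: one exhibits effective rational degree-$4$ divisors as traces of appropriate CM points and checks by Riemann--Roch that they move in a pencil, producing the desired rational tetragonal map.

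For Part~(3), three disjoint arguments cover the remaining candidates. For the $322$ pairs in \cref{table: non-tetragonal_CS2} I would locate two distinct Atkin--Lehner quotient maps $\pi_i \colon X_0^D(N)\to X_0^D(N)/\langle w_{m_i}\rangle$ of degree~$2$ and apply \cref{theorem: CS} to a hypothetical tetragonal map $f$ together with each $\pi_i$: either the inequality $g(X_0^D(N)) \le 4\, g(X_0^D(N)/\langle w_{m_i}\rangle) + 3$ fails, or $f$ and $\pi_i$ factor through a common cover; when $g(X_0^D(N))\ge 10$, \cref{lemma: CS_unique_tetragonal} pins $f$ down uniquely and the existence of two \emph{independent} involutions forces a contradiction. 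For $(14,31)$ and the $11$ pairs in \cref{table: non-tetragonal_bihyperelliptic_lemma}, \cref{lemma: CS_with_involution} shows that a tetragonal $X_0^D(N)$ of genus $\ge 10$ must be a double cover of a geometrically sub-hyperelliptic curve; using \cref{proposition: not_hyperelliptic} to restrict the admissible $\sigma$ with $X_0^D(N)/\langle\sigma\rangle$ potentially geometrically hyperelliptic, one then applies \cref{corollary: geom_bi_hyperelliptic_reduction} and \cref{algorithm: introduction_version} to find a prime power $q=p^r$, with $p\nmid DN$, violating $\#X_0^D(N)(\F_q) \le 4(q+1)$. The final $11$ explicit pairs are dispatched by applying \cref{lemma: gonality_bound_by_point_counts} directly: one exhibits a good-reduction $q$ with $\#X_0^D(N)(\F_q) > 4(q+1)$.

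The principal obstacle is the Castelnuovo--Severi step of Part~(3). In borderline low-genus cases the naive bound $g\le 4g(X_0^D(N)/\langle w_m\rangle)+3$ is not violated for any single quotient, so the argument has to exploit the common-factorization alternative by genuinely combining two independent involutions; moreover, the uniqueness input of \cref{lemma: CS_unique_tetragonal} is only available for $g\ge 10$, so the few candidates with $2\le g \le 9$ need a separate hands-on analysis (often invoking point counts as a secondary obstruction). The bookkeeping of which pairs fall into which regime across all $516$ candidates, and the verification that the point-count data from \cref{algorithm: introduction_version} suffice at each step, is the bulk of the remaining work; everything else amounts to systematic enumeration.
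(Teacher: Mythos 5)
Your overall architecture matches the paper's, but several load-bearing steps are wrong or missing. The most serious is the Castelnuovo--Severi step for the $322$ pairs: with $f$ of degree $4$ to a genus-$0$ curve and $\pi_i$ of degree $2$ to a quotient of genus $g_m$, \cref{theorem: CS} gives $g \leq 4\cdot 0 + 2g_m + (4-1)(2-1) = 2g_m+3$, not $g \leq 4g_m+3$. With your coefficient the inequality fails for almost none of the $322$ pairs (e.g.\ $(6,97)$ has $g=15$, $g_m=4$, and $15 \leq 19$), so the argument collapses. Moreover, when the inequality does fail you must still dispose of the factorization alternative, namely that $f$ factors through $X_0^D(N)/\langle w_m\rangle$, which would force that quotient to be geometrically hyperelliptic; the paper rules this out by four separate sub-methods (automorphism-group results, a second CS application, \cref{proposition: not_hyperelliptic}, and point counts via \cref{lemma: geom_hyperelliptic_reduction}), none of which appear in your plan, and your substitute via \cref{lemma: CS_unique_tetragonal} is unavailable for the candidates of genus $\leq 9$.

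Two further gaps. In Part (2), you claim the $59$ explicit pairs follow from conic quotients plus the known (geometrically) hyperelliptic curves, but $23$ of them are proved tetragonal over $\Q$ because they are \emph{bielliptic} over $\Q$ (by \cite{PS25}): the degree-$2$ quotient has genus $1$, and one needs it to have a rational point to get a degree-$2$ map to $\P^1_\Q$ --- the "conics always map $2{:}1$ to $\P^1_\Q$" observation does not apply, and for order-$4$ quotients of genus $0$ that observation only yields a degree-$8$ map unless the quotient is actually isomorphic to $\P^1_\Q$. (Relatedly, your Part (1) criterion of genus $\leq 1$ for order-$2$ quotients omits the genus-$2$ quotients that populate much of \cref{table: tetragonal_AL2}.) Finally, for the last $11$ pairs of Part (3) the paper states explicitly that small-field point counts do \emph{not} succeed; the actual argument combines \cref{lemma: CS_with_involution} with the fact that $\Aut(X_0^D(N)) = W_0(D,N)$ (from \cref{section: automorphisms} and \cref{lemma: KR_all_atkin_lehner}) to conclude that a geometrically tetragonal such curve would have a geometrically sub-hyperelliptic Atkin--Lehner quotient and hence would already appear in Part (1). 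Your proposed direct bound $\#X_0^D(N)(\F_q) > 4(q+1)$ will not be attainable for these pairs, and the same concern applies to your treatment of $(14,31)$, which the paper instead handles by a CS argument against the degree-$4$ map to the genus-$1$ quotient $X_0^{14}(31)/\langle w_2, w_{217}\rangle$.
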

\begin{proof}
The code for all computations described in this proof can be found in the file \texttt{tetragonal{\_}sieving.m} in \cite{MPSSRep}.

\begin{enumerate}
    \item We witness the curves in \cref{table: tetragonal_AL2} and \cref{table: tetragonal_AL4} as being geometrically tetragonal via the existence of an Atkin--Lehner quotient map $X_0^D(N) \to X_0^D(N)/W$ with $W \leq W_0(D,N)$ of order $4$ and with $g(X_0^D(N)/W) = 0$, or via the existence of such a map with $|W| = 2$ and with $g(X_0^D(N)/W) \leq 2$. All curves with at least one of these types of quotient maps appear in one of these tables, though we do not repeat pairs to be exhaustive in listing all such maps. For example, we only list $(6,25)$ in \cref{table: tetragonal_AL2}, but it also has an Atkin--Lehner subgroup of order~$4$ yielding a conic quotient curve. 
    
    We find that the four pairs listed in \cref{table: geom_tetragonal_by_Polizzi} are geometrically tetragonal via the existence of a sequence of covers 
    \[ X_0^D(N) \to X_0^D(N) / \langle w_{m_1} \rangle \to X_0^D(N) / \langle w_{m_1}, w_{m_2} \rangle, \]
    for $m_1 \neq m_2$ Hall divisors of $DN$, such that the bottom curve has genus~$2$ and the intermediate quotient has genus~$3$. It then follows from \cite[Theorem~3.4]{Pol06} that the intermediate quotient is geometrically hyperelliptic (and bielliptic), and so $X_0^D(N)$ is geometrically tetragonal. We list such values of $m_1$ and $m_2$ for each pair in \cref{table: geom_tetragonal_by_Polizzi}.

    \item If $(D,N)$ satisfies at least one of the following conditions, then the curve $X_0^D(N)$ is tetragonal over~$\Q$:
    \begin{enumerate}
        \item there is a subgroup $W \leq W_0(D,N)$ of order~$4$ so that $X_0^D(N)/W$ is isomorphic to $\mathbb{P}^1_\Q$, i.e., has genus~$0$ and a rational point;
        \item $X_0^D(N)$ is bielliptic over~$\Q$;
        \item $X_0^D(N)$ has a degree~$2$ map to a curve which is sub-hyperelliptic over~$\Q$. 
    \end{enumerate}
    If $X_0^D(N)$ is geometrically hyperelliptic (and hence, \emph{a fortiori}, if it is hyperelliptic over~$\Q$), then the hyperelliptic involution is an Atkin--Lehner involution~$w_m$ \cite[p.~301]{Ogg83}. Suppose that $X_0^D(N)$ is hyperelliptic over~$\Q$ with hyperelliptic involution~$w_m$, and take some involution $w_{m'} \neq w_m$. The quotient $X_0^D(N)/\langle w_{m}, w_{m'} \rangle$, like $X_0^D(N)/\langle w_m \rangle$, must be isomorphic to $\mathbb{P}^1_\Q$; so in this case $(D,N)$ satisfies the item~(a) above and is tetragonal. By Part~(3) of \cref{prop: known_gonalities}, this applies to the $33$ pairs in the following set
    \begin{align*}  
    \{ &( 6, 11 ),
    ( 6, 19 ),
    ( 6, 29 ),
    ( 6, 31 ),
    ( 6, 37 ),
    ( 10, 11 ),
    ( 10, 23 ),
    ( 14, 5 ),
    ( 15, 2 ),
    ( 22, 3 ),
    ( 22, 5 ), \\
    &( 26, 1 ),
    ( 35, 1 ),
    ( 38, 1 ),
    ( 39, 1 ),
    ( 39, 2 ),
    ( 51, 1 ),
    ( 55, 1 ),
    ( 58, 1 ),
    ( 62, 1 ),
    ( 69, 1 ),
    ( 74, 1 ), \\
    &( 86, 1 ),
    ( 87, 1 ),
    ( 94, 1 ),
    ( 95, 1 ),
    ( 111, 1 ),
    ( 119, 1 ),
    ( 134, 1 ),
    ( 146, 1 ),
    ( 159, 1 ),
    ( 194, 1 ),
    ( 206, 1 ) \}. 
    \end{align*}
    
    From \cite{PS25}, the curve $X_0^D(N)$ is bielliptic over $\Q$, satisfying the item~(b), for the following $23$ pairs $(D,N)$:
        \begin{align*} 
        \{&( 6, 17 ),
    ( 6, 23 ),
    ( 6, 25 ),
    ( 6, 41 ),
    ( 6, 71 ),
    ( 10, 13 ),
    ( 10, 17 ),
    ( 10, 29 ),
    ( 14, 3 ),
    ( 14, 13 ), \\
    &( 14, 19 ),
    ( 15, 7 ),
    ( 15, 13 ),
    ( 15, 17 ),
    ( 21, 2 ),
    ( 21, 5 ),
    ( 21, 11 ),
    ( 22, 7 ),
    ( 22, 17 ),
    ( 33, 2 ), \\
    &( 33, 7 ),
    ( 35, 2 ),
    ( 46, 5 )\}.
    \end{align*}
      
    The pairs $(15,4)$ and $(39,4)$ satisfy the item~(c), as the corresponding curves are double covers of the curves $X_0^{15}(2)$ and $X_0^{39}(2)$, respectively, which are hyperelliptic over~$\Q$. The pair $(51,2)$ also satisfies the item~(c); its quotient $X_0^{51}(2)/\langle w_{51} \rangle$ is bielliptic of genus~$2$ over~$\Q$, with equation given in \cite{PS25b}. The curve $X_0^{10}(9)$ is tetragonal by the item~(a): its genus~$0$ quotient by the subgroup $\langle w_2, w_{45}\rangle$ has a rational point by \cite[Proposition~4.25]{NR15}.
    
    For the remaining pairs which we found to be geometrically tetragonal in Part~(1) via Atkin--Lehner quotient maps, we attempt to show that $X_0^D(N)$ is tetragonal over~$\Q$ by showing that either
    \begin{itemize}
        \item there is an Atkin--Lehner quotient map $X_0^D(N) \to X_0^D(N)/\langle w_m \rangle$ of  degree~$2$ to a sub-hyperelliptic curve with a rational CM point, or 
        \item there is an Atkin--Lehner quotient map $X_0^D(N) \to X_0^D(N)/\langle w_{m_1}, w_{m_2} \rangle$ of degree~$4$ with the latter quotient having genus~$0$ and a rational CM point, such that it is isomorphic to $\mathbb{P}^1_{\Q}$.
    \end{itemize}
    In both cases, we prove the existence of rational CM points on such quotients by using the work of \cite{GR06} and \cite{Saia24} to determine all quadratic CM points on $X_0^D(N)$, and using \cite[Corollary 5.14]{GR06} to determine such points whose image on a quotient of the form $X_0^D(N)/\langle w_m \rangle$ has residue field $\Q$. All $83$ pairs handled in this manner, along with an Atkin--Lehner involution whose corresponding quotient provably has a rational CM point and is used in the argument, are listed in \cref{table: tetragonal_by_CM}.
    
    \item For the $322$ pairs listed in \cref{table: non-tetragonal_CS2}, we apply \cref{lemma: CS_tetragonal_with_involution} (to the base change of $X_0^D(N)$ to $\overline{\mathbb{Q}}$) by finding a Hall divisor $m$ of $DN$ such that \cref{CS_tetragonal} holds with $\sigma = w_m$ and such that $X_0^D(N)/\langle w_m \rangle$ is not geometrically sub-hyperelliptic. Such a quotient must have genus $g_m > 2$, as otherwise we would have found these curves $X_0^D(N)$ to be geometrically tetragonal in Part~(1), and we prove such quotients are not geometrically hyperelliptic by one of the following means:
    \begin{enumerate}
        \item We have knowledge that this curve has no non-Atkin--Lehner automorphisms by the results of \cref{section: automorphisms}, and so it suffices to check for Atkin--Lehner quotients $X_0^D(N)/\langle w_m,w_{m'}\rangle$ with $m'>1$ and $m' \neq m$ of genus~$0$. 
        \item We apply \cref{lemma: CS_geom_hyperelliptic_bound} to $X = X_0^D(N)/\langle w_m \rangle$ with $\sigma = w_{m'}$ for some Hall divisor $m' > 1$ of $DN$ with $m' \neq m$.
        \item We use \cref{proposition: not_hyperelliptic}, showing that $g_m \neq \frac{g}{2}$ or that $g_m \not\in\left\{\frac{g-1}{2}, \frac{g + 1}{2}\right\}$, for $g=g(X_0^D(N))$ and $g_m = g\left(X_0^D(N)/\langle w_m \rangle\right)$, according to whether $g$ is even or odd.
        \item We use finite field point counts to show that the inequality in \cref{lemma: hyperelliptic_bielliptic_point_count_bounds}, Part~(1), is not satisfied.
    \end{enumerate}
    
    For the pair $(14,31)$, we argue similarly using the Castelnuovo--Severi inequality with a larger Atkin--Lehner subgroup: $X_0^{14}(31)$ has a degree~$4$ map to the genus~$1$ curve $X_0^{14}(31)/\langle w_{2}, w_{217} \rangle$. If $X_0^{14}(31)$ were geometrically tetragonal, then a degree~$4$ map to a conic would be independent of the quotient map to~$X_0^{14}(31)/\langle w_{2}, w_{217} \rangle$ unless both maps factored through a geometrically hyperelliptic curve $X_0^D(N)/\langle w_m \rangle$ for some $w_m \in \langle w_{2}, w_{217} \rangle$. We rule out this possibility similarly to the way we did in the previous paragraph. The Castelnuovo--Severi inequality then implies
    \[ 17 = g(X_0^{14}(31)) \leq 4 \cdot 0 + 4 \cdot 1 + 3 \cdot 3 = 13, \]
    giving a contradiction.
\end{enumerate}
\end{proof}

\begin{proposition}\label{prop: tetragonal_over_Q_iff}
\phantom{a}
\begin{enumerate}
    \item For the $14$ triples $(D,N,W)$ in the set 
\begin{align*}
    \{&(6,55,\langle w_{10}, w_{66} \rangle), (6,77, \langle w_{21},w_{66} \rangle), (6,83,\langle w_{6}, w_{166} \rangle), (10,33,\langle w_{30}, w_{33} \rangle), \\
    &(14,15,\langle w_{14}, w_{30} \rangle), (14, 23,\langle w_{14}, w_{46} \rangle), (21,13,\langle w_{21}, w_{39} \rangle), (22,19,\langle w_{22}, w_{38} \rangle), \\
    &(51,5,\langle w_{5}, w_{51} \rangle), (55,2,\langle w_{2}, w_{55} \rangle), (86,3,\langle w_{6}, w_{86} \rangle), (87,2,\langle w_{2}, w_{87} \rangle), \\
    & (95, 2, \langle w_{10}, w_{38} \rangle), (111,2,\langle w_{2}, w_{111} \rangle)\},
\end{align*}
we have that $X_0^D(N)$ is tetragonal over $\Q$ if and only if $\left(X_0^D(N)/W\right)(\Q) \neq \varnothing$. 

\item The curve $X_0^{46}(7)$ is tetragonal over $\Q$ if and only if $X_0^{46}(7)/\langle w_{161} \rangle$ is hyperelliptic over $\Q$. 
\end{enumerate}
\end{proposition}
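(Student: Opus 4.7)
The plan is to combine the uniqueness of the tetragonal pencil for curves of genus at least $10$ (\cref{lemma: CS_unique_tetragonal}) with a standard Galois descent. Every curve appearing in the statement has genus at least $11$: this is visible in \cref{table: tetragonal_AL2}, \cref{table: tetragonal_AL4}, and \cref{table: geom_tetragonal_by_Polizzi} for Part~(1), and equals $13$ for $X_0^{46}(7)$ in Part~(2). So the geometric tetragonal map is unique up to composition with an element of $\Aut(\P^1_{\Qbar})$.

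For Part~(1), I would first verify that for each of the $13$ triples the quotient $X_0^D(N)/W$ is a geometric conic. For the ten triples appearing in \cref{table: tetragonal_AL4} this is immediate. For the three remaining triples $(55,2,\langle w_2,w_{55}\rangle)$, $(87,2,\langle w_2,w_{87}\rangle)$, and $(111,2,\langle w_2,w_{111}\rangle)$, the subgroup $\langle w_m\rangle$ with $m\in\{55,87,111\}$ cuts out a genus-$2$ intermediate quotient by \cref{table: tetragonal_AL2}, and a direct genus computation (using \texttt{quot{\_}genus.m} from \cite{MPSSRep}) shows that the further quotient by the induced action of $w_2$ has genus zero; equivalently, $w_2$ induces the unique hyperelliptic involution on the intermediate genus-$2$ curve. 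Granted this, the "if" direction of Part~(1) is immediate: a rational point on the conic $X_0^D(N)/W$ identifies it with $\P^1_\Q$, and composition with the degree-$4$ quotient map $q\colon X_0^D(N)\to X_0^D(N)/W$ yields a tetragonal map over $\Q$.

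For the converse, given a tetragonal map $f\colon X_0^D(N)\to\P^1_\Q$, I would fix any $\Qbar$-isomorphism $\phi\colon (X_0^D(N)/W)_{\Qbar}\to\P^1_{\Qbar}$. By \cref{lemma: CS_unique_tetragonal} there exists $\alpha\in\Aut(\P^1_{\Qbar})$ with $f_{\Qbar}=\alpha\circ\phi\circ q_{\Qbar}$. Setting $\psi:=\alpha\circ\phi$, the identity $\psi^\sigma\circ q_{\Qbar}=\psi\circ q_{\Qbar}$, valid for every $\sigma\in\Gal(\Qbar/\Q)$ because both $f$ and $q$ are defined over $\Q$, forces $\psi^\sigma=\psi$ by surjectivity of $q_{\Qbar}$. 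Hence $\psi$ descends to a $\Q$-isomorphism $X_0^D(N)/W\cong\P^1_\Q$, producing the desired rational point.

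Part~(2) follows by exactly the same scheme, with $W$ replaced by $\langle w_{161}\rangle$ and the target replaced by $Y:=X_0^{46}(7)/\langle w_{161}\rangle$ from \cref{table: geom_tetragonal_by_Polizzi}: the forward direction composes the degree-$2$ cover $q$ with a $\Q$-rational hyperelliptic map of $Y$, and the converse descends the $\Qbar$-map $\psi\colon Y_{\Qbar}\to\P^1_{\Qbar}$ provided by uniqueness to a $\Q$-map of degree $2$, exhibiting $Y$ as hyperelliptic over $\Q$. The only substantive step that is not purely formal is the preliminary genus computation for the three outlying triples in Part~(1); once that input is in hand, the remainder is a routine Galois-descent argument essentially identical across all fourteen cases.
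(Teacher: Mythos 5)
Your proof is correct and follows essentially the same route as the paper's: identify the degree~$4$ Atkin--Lehner quotient map (resp.\ the degree~$2$ cover of $Y$ followed by its hyperelliptic map) as the unique geometric tetragonal pencil via \cref{lemma: CS_unique_tetragonal}, and conclude that tetragonality over $\Q$ is equivalent to the genus-$0$ target being $\P^1_\Q$ (resp.\ to $Y$ being hyperelliptic over $\Q$). The only difference is that you make explicit the Galois-descent step and the genus-$0$ verification for the three triples not listed in \cref{table: tetragonal_AL4}, both of which the paper leaves implicit.
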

\begin{proof}
\phantom{a}
\begin{enumerate}
    \item For each $(D,N,W)$ in the given set, we know $X_0^D(N)$ is geometrically tetragonal via the natural degree~$4$ map to the genus~$0$ curve $X_0^D(N)/W$. We also have $g(X_0^D(N)) \geq 10$, and we find that $X_0^D(N)$ is neither geometrically hyperelliptic nor geometrically bielliptic (either using finite field point count computations and \cref{lemma: hyperelliptic_bielliptic_point_count_bounds} or using knowledge that all involutions are Atkin--Lehner from the results of \cref{section: automorphisms}). By \cref{corollary: CS_not_tetragonal}, we then know that $X_0^D(N)$ is tetragonal over~$\Q$ if and only if the known geometrically tetragonal map is tetragonal over~$\Q$, i.e., if $X_0^D(N)/W \cong \P^1_\Q$. 

    \item The genus~$3$ curve $Y = X_0^{46}(7)/\langle w_{161} \rangle$ is a degree~$2$ cover of the genus~$2$ curve $X_0^{46}(7)/\langle w_7, w_{23} \rangle$, and, hence, is geometrically hyperelliptic by \cite[Theorem~3.4]{Pol06}. We recognize that $X = X_0^{46}(7)$ is geometrically tetragonal via the composition of the natural quotient map $X \to Y$ with the hyperelliptic quotient map $Y \to C$ to a conic over~$\Q$, and note in particular that this composition is defined over~$\mathbb{Q}$. As $X$ has genus $13 \geq 10$ and is proven to be neither geometrically hyperelliptic nor geometrically bielliptic by either of the same checks referred to in the proof of Part~(1), this composition is the unique geometrically tetragonal map on~$X$ by \cref{lemma: CS_unique_tetragonal}. Thus, $X$ is tetragonal over~$\Q$ if and only if the quotient of~$Y$ by its hyperelliptic involution has a rational point (as in \cref{corollary: CS_not_tetragonal}). \qedhere
\end{enumerate}
\end{proof}

\begin{corollary}\label{not_tetragonal_cor}
For the $9$ pairs $(D,N)$ in the set 
\begin{align*}
    \left \{(6,55), (6,77), (6,83), (10,33), (14,15), (14,23), (21, 13), (22, 19), (86,3) \right\},
\end{align*}
we have that $X_0^D(N)$ is not tetragonal over $\Q$.
\end{corollary}
\begin{proof}
For each pair $(D,N)$ listed in the statement of the corollary, we know from  Part~(1) of \cref{prop: tetragonal_over_Q_iff} that $X_0^D(N)$ is tetragonal over $\Q$ if and only if the quotient $X_0^D(N)/W$ specified therein has a rational point. The claim then follows from \cite[Corollary~1.3]{PS25b}. (More specifically: using \cite[Algorithm~3.3]{PS25b}, one can compute that there is at least one prime $p \mid D$ such that $X_0^D(N)/W$ has no $\Q_p$ points, giving the claim.) 
\end{proof}

From the $516$ tetragonal candidates, \cref{prop: tetragonal_sieving} narrows us down to only $32$ pairs for which we remain unsure of whether $X_0^D(N)$ is geometrically tetragonal. These are listed in \cref{table: remaining_geom_tetragonal_candidates}. Of course, if $X_0^D(N)$ is tetragonal over~$\Q$, then it is also geometrically tetragonal, and so it must appear in this referenced list or in \cref{table: tetragonal_AL2}, \cref{table: tetragonal_AL4}, or \cref{table: geom_tetragonal_by_Polizzi}. Of all such pairs, following \cref{not_tetragonal_cor}, there remain $32$ pairs for which we remain unsure of whether $X_0^D(N)$ is tetragonal over~$\Q$. These pairs are listed in \cref{table: remaining_tetragonal_candidates}. Note that these \emph{are not} the same $32$ pairs in \cref{table: remaining_geom_tetragonal_candidates}; there are $11$ curves which we prove are not tetragonal over~$\Q$ using finite field point counts, and likewise there are $11$ curves $X$ which are geometrically tetragonal but for which we remain unsure of whether $X$ is tetragonal over~$\Q$. This completes the proof of \cref{theorem: tetragonal_introduction}.

{
\begin{longtable}{|c|c||c|c||c|c||c|c|}
\caption{All $32$ pairs $(D,N)$ for which we remain unsure of whether $X_0^D(N)$ is geometrically tetragonal following \cref{theorem: tetragonal_introduction}. Pairs for which we have proved that $X_0^D(N)$ is not tetragonal over $\Q$ are written in bold.}\label{table: remaining_geom_tetragonal_candidates} \\  \hline 
$(D,N)$ & $g$ & $(D,N)$ & $g$ & $(D,N)$ & $g$ & $(D,N)$ & $g$ \\ \hline \hline
$ (6 , 73 )$ & $ 11 $
 & 
$ (10 , 27 )$ & $ 13 $
 & 
$ (15 , 16 )$ & $ 17 $
 & 
$ (22 , 9 )$ & $ 11 $
\\ \hline
$ (33 , 4 )$ & $ 11 $
 & 
$ (34 , 13 )$ & $ 17 $
 &
$ (38 , 5 )$ & $ 9 $
 & 
$ (46 , 9 )$ & $ 23 $
\\ \hline 
$\mathbf{ (51 , 7 )}$ & $\mathbf{ 21 }$
&
$ (55 , 3 )$ & $ 13 $
 & 
$\mathbf{ (62 , 5 )}$ & $\mathbf{ 15 }$
 & 
$\mathbf{ (65 , 2 )}$ & $\mathbf{ 13 }$
\\ \hline
$ (69 , 2 )$ & $ 11 $
 & 
$ (94 , 3 )$ & $ 15 $
 & 
$ (133 , 1 )$ & $ 9 $
 &
$ (177 , 1 )$ & $ 9 $
\\ \hline 
$ (187 , 1 )$ & $ 13 $
 & 
$ \mathbf{(205 , 1 )}$ & $\mathbf{ 13 }$
 &
$ (213 , 1 )$ & $ 11 $
 & 
$ (214 , 1 )$ & $ 8 $
\\ \hline 
$ \mathbf{(217 , 1 )}$ & $ \mathbf{15} $
&
$ (226 , 1 )$ & $ 9 $
 & 
$ (262 , 1 )$ & $ 10 $
 & 
$\mathbf{ (267 , 1 )}$ & $\mathbf{ 15} $
\\ \hline
$ (298 , 1 )$ & $ 12 $
 & 
$\mathbf{ (301 , 1 )}$ & $\mathbf{ 21 }$
 & 
$ (358 , 1 )$ & $ 14 $
 &
$ (382 , 1 )$ & $ 15 $
\\ \hline 
$\mathbf{ (427 , 1 )}$ & $\mathbf{ 31 }$
 & 
$ \mathbf{(445 , 1 )}$ & $\mathbf{ 29} $
&
$ \mathbf{(478 , 1 )}$ & $ \mathbf{19} $ 
& 
$\mathbf{ (505 , 1 )}$ & $\mathbf{ 33 }$  \\ \hline
\end{longtable}
}

{
\begin{longtable}{|c|c||c|c||c|c||c|c|}
\caption{All $32$ pairs $(D,N)$ for which we remain unsure of whether $X_0^D(N)$ is tetragonal over $\Q$ following \cref{theorem: tetragonal_introduction}. Pairs for which we have proved that $X_0^D(N)$ is geometrically tetragonal are written in bold.}\label{table: remaining_tetragonal_candidates} \\ \hline 
$(D,N)$ & $g$ & $(D,N)$ & $g$ & $(D,N)$ & $g$ & $(D,N)$ & $g$ \\ \hline 
$ \mathbf{(
6 , 49 )}$ & $\mathbf{ 9 }$
 & 
$ (
6 , 73 )$ & $ 11 $
 & 
$ (
10 , 27 )$ & $ 13 $

 &
 $\mathbf{ (
14 , 9 )}$ & $\mathbf{ 7 }$
\\ \hline 
$ \mathbf{(
15 , 8 )}$ & $\mathbf{ 9 }$
 & 
$ (
15 , 16 )$ & $ 17 $
 &
$ \mathbf{(
21 , 4 )}$ & $\mathbf{ 7 }$
 & 
$ \mathbf{(
21 , 8 )}$ & $\mathbf{ 13 }$
\\ \hline 
$ (
22 , 9 )$ & $ 11 $
&
$ (
33 , 4 )$ & $ 11 $
 & 
$ (
34 , 13 )$ & $ 17 $
 & 
$ (
38 , 5 )$ & $ 9 $
\\ \hline
$ \mathbf{(
46 , 7 )}$ & $\mathbf{ 13 }$
 & 
$ (
46 , 9 )$ & $ 23 $
 & 
$\mathbf{ (
51 , 5 )}$ & $\mathbf{ 17 }$
 &
$ \mathbf{(
55 , 2 )}$ & $\mathbf{ 11 }$
 \\ \hline
$ (
55 , 3 )$ & $ 13 $
 & 
$ (
69 , 2 )$ & $ 11 $
 &
$\mathbf{ (
87 , 2 )}$ & $\mathbf{ 15 }$
 & 
$ (
94 , 3 )$ & $ 15 $
\\ \hline 
$\mathbf{ (
95 , 2 )}$ & $\mathbf{ 19 }$
&
$\mathbf{ (
111 , 2 )}$ & $\mathbf{ 19 }$
 & 
$ (
133 , 1 )$ & $ 9 $
 & 
$ (
177 , 1 )$ & $ 9 $
\\ \hline
$ (
187 , 1 )$ & $ 13 $
 & 
$ (
213 , 1 )$ & $ 11 $
 & 
$ (
214 , 1 )$ & $ 8 $
 &
$ (
226 , 1 )$ & $ 9 $
\\ \hline 
$ (
262 , 1 )$ & $ 10 $
 & 
$ (
298 , 1 )$ & $ 12 $
&
$ (
358 , 1 )$ & $ 14 $ & 
$ (
382 , 1 )$ & $ 15 $ \\ \hline
\end{longtable}
}

The pairs in \cref{table: remaining_geom_tetragonal_candidates} and \cref{table: remaining_tetragonal_candidates} appear to require further techniques to handle, and we leave them to future work. In particular, those pairs in \cref{table: remaining_geom_tetragonal_candidates} with $N$ not squarefree are not amenable to our techniques for determining the automorphism group from \cref{section: automorphisms}, and those pairs with genus $g \leq 9$ are not amenable to \cref{corollary: CS_not_tetragonal}.


\section{Point count records and maximal curves}\label{section: maximal_curves}

In this section, we display tables with our best results on the number of points for a curve of given genus over a specified finite field among Atkin--Lehner quotients of Shimura curves $X_0^D(N)$.

\cref{tabrecord} collects $116$ improvements with respect to data collected on \cite{ManyPoints}. These improvements have been shared with the authors of the referenced site and are included in their database at the time of writing this work. They are all (non-trivial) Atkin--Lehner quotients of Shimura curves $X_0^D(N)$.

{
\begin{longtable}{|R|R|R|R|R||R|R|R|R|R|}

\caption{The 116 improved bounds for $|X(\F_{p^k})|$ 
among curves $X$ of the form $X_0^D(N)/\langle w_{m_1},\ldots,w_{m_r}\rangle$ for $DN \le 10000$, $D>1$, genus $g\le 50$, primes $p\le 19$, and $k\le 5$.}\label{tabrecord} \\
\hline 
\rule{0pt}{2.3ex} g & p^k & (D,N) & m_1,\ldots,m_r & |X(\F_{p^k})| & g & p^k & (D,N) & m_1,\ldots,m_r & |X(\F_{p^k})|\\
\hline \hline 
\rule{0pt}{2.3ex} 
8 & 3^2 & (14,95) & 5,266 & 44 & 
8 & 13^2 & (6,209 ) & 57,66 & 370 \\ 
\hline 
\rule{0pt}{2.3ex} 
8 & 17^5 & (546,5 ) & 2,91,39 & 1436014 &
10 & 7^2 & (10,291) & 6,5,97 & 170\\ 
\hline 
\rule{0pt}{2.3ex} 
10 & 7^5 & (38,15) & 3,19 & 18850 &
10 & 17^5 & (14,141 ) & 2,7,47 & 1441950 \\ 
\hline 
\rule{0pt}{2.3ex} 
10 & 19^2 & (215,4) & 5,43 & 720  & 
11 & 2^2 & (4485,1) & 3,13,115 & 28 \\
\hline 
\rule{0pt}{2.3ex} 
11 & 5^5 & (21,67) & 7,67 & 4000 &
11 & 7^2 & (6474,1)& 3,83,26& 184\\ 
\hline 
\rule{0pt}{2.3ex} 
11 & 7^5 &(4182,1)& 17,6,123 & 18883 &
11 & 19^2 & (39,34 ) & 6,13,17 & 748\\ 
\hline 
\rule{0pt}{2.3ex} 
12 & 7^2 & (26,57)& 57,78& 190 &
12 & 7^5 & (6,475) & 25,57,6 & 19682\\ 
\hline 
\rule{0pt}{2.3ex} 
12 & 11^2 & (15,178 ) & 2,89,15 & 342  &
12 & 13^5 & (6,265) &265,30& 381782\\
\hline 
\rule{0pt}{2.3ex} 
12 & 17^5 & (570,11) & 5,6,209 & 1441318 & 
13 & 7^2 & (46,25) & 2,23 & 186\\
\hline 
\rule{0pt}{2.3ex} 
13 & 7^5 & (82,15) & 41,10 & 19456  &
13 & 11^5 & (39,8 ) & 3 & 168744\\
\hline 
\rule{0pt}{2.3ex} 
13 & 13^2 & (10,99 ) & 10,22 & 456  &
13 & 13^4 & (35,9) & 7 & 32436\\  
\hline 
\rule{0pt}{2.3ex} 
13 & 17^5 & (2415,1 ) & 15,161 & 1444848  & 
14 & 3^2 & (65,14) & 5,91 & 60 \\
\hline 
\rule{0pt}{2.3ex} 
14 & 7^2 & (10,97) & 97 & 198  &
14 & 13^2 & (714,11) & 33,238,1309 & 460\\ 
\hline 
\rule{0pt}{2.3ex} 
14 & 13^5 & (6,265) & 30,159& 384006  &  
14 & 19^5 & (134,15 ) & 2,5,67 & 2514998\\ 
\hline 
\rule{0pt}{2.3ex} 
15 & 13^5 & (15,112)&16,5,21& 386416  & 
15 & 17^2 & (1590,1 ) & 30 & 724 \\ 
\hline 
\rule{0pt}{2.3ex} 
17 & 7^5 & (82,15) & 2,41 & 20106  & 
17 & 13^2 & (10,63) & 10 & 544 \\
\hline 
\rule{0pt}{2.3ex} 
17 & 13^5 & (15,32) & 5 & 386880 &  
18 & 7^2 & (22,43) & 43 & 230\\
\hline 
\rule{0pt}{2.3ex} 
18 & 17^2 & (26,57) & 3,247 & 784  &
19 & 7^5 & (26,27) & 13 & 20520\\ 
\hline 
\rule{0pt}{2.3ex} 
19 & 17^5 & (14,141 ) & 14,94 & 1451936 &
20 & 19^2 & (14,117 ) & 13,63 & 946\\
\hline 
\rule{0pt}{2.3ex} 
21 & 5^2 & (14,277) & 7,554 & 164 & 
21 & 11^2 & (57,14 ) & 399 & 476\\ 
\hline 
\rule{0pt}{2.3ex} 
21 & 13^2 & (10,171) & 10,19 & 596 & 
21 & 17^2 & (2415,2) & 15,14,46 & 848 \\ 
\hline 
\rule{0pt}{2.3ex} 
22 & 2^2 & (15,197) & 3,985 & 45 & 
22 & 3^2 & (215,4) & 215 & 84 \\
\hline 
\rule{0pt}{2.3ex} 
22 & 11^2 & (14,135 ) & 7,270 & 498 &
22 & 13^2 & (57,28) & 19,21 & 594 \\
\hline 
\rule{0pt}{2.3ex} 
22 &  19^2 & (51,26 ) & 39,51 & 1040 &
23 & 2^2 & (39,31) & 1209 & 46 \\
\hline 
\rule{0pt}{2.3ex} 
23 & 11^2 & (35,26) & 455 &508 & 
23 & 13^2 & (22,49) & 22 & 640 \\
\hline 
\rule{0pt}{2.3ex} 
23 & 17^2 & (22,87) & 3,58 & 938  & 
23 & 19^2 & (690,7 ) & 14,483 & 1004\\
\hline 
\rule{0pt}{2.3ex} 
24 & 5^2 & (14,253) & 7, 506 & 190 & 
24 & 17^2 & (2415,2) & 161,69,115 & 910 \\ 
\hline 
\rule{0pt}{2.3ex} 
25 & 13^2 & (10,99) & 22 & 696  &
25 & 19^2 & (34,123 ) & 3,17,41 & 1091\\ 
\hline 
\rule{0pt}{2.3ex} 
26 & 11^2 & (21,118 ) & 14,1239 & 550 & 
26 & 17^2 & (6,539) & 2,147 & 928 \\
\hline 
\rule{0pt}{2.3ex} 
26 & 19^2 & (26,85 ) & 13,34 & 1106 &
27 & 13^2 & (4935,1) &5,987 & 686 \\ 
\hline 
\rule{0pt}{2.3ex} 
27 & 17^2 & (3045,2 ) & 58,435,1015 & 978  & 
28 & 5^2 & (38,27) & 19 & 216 \\
\hline 
\rule{0pt}{2.3ex} 
28 & 11^2 & (35,52 ) & 5,91 & 570 & 
28 & 13^2 & (15,164) & 15,41 & 756\\ 
\hline 
\rule{0pt}{2.3ex} 
28 & 19^2 & (33,58 ) & 11,29 & 1152  & 
29 & 3^2 & (35,89) & 5,623 & 108\\
\hline 
\rule{0pt}{2.3ex} 
29 & 5^2 & (623,3) & 3,623 & 208  & 
29 & 11^2 & (65,14 ) & 455 & 572\\ 
\hline 
\rule{0pt}{2.3ex} 
29 & 13^2 & (33,56) & 11,21 & 704  &
29 & 17^2 & (15,77 ) & 33 & 1020\\ 
\hline 
\rule{0pt}{2.3ex} 
29 & 19^2 & (10,117) & 130 & 1216  &
30 & 2^2 & (1169,1) & 1169 & 55\\
\hline 
\rule{0pt}{2.3ex} 
30 & 3^2 & (391,4) & 17,23 & 109  & 
30 & 13^2 & (118,9) & 118 & 746 \\
\hline 
\rule{0pt}{2.3ex} 
30 & 17^2 & (177,8 ) & 3,59 & 1050 &
31 & 11^2 & (14,345 ) & 5,23,14 & 684\\
\hline 
\rule{0pt}{2.3ex} 
31 & 13^2 & (15,62) & 93 & 780  &
31 & 19^2 & (1155,2 ) & 7,11 & 1272\\ 
\hline 
\rule{0pt}{2.3ex} 
32 & 3^2 & (1055,2) & 2,1055 & 113  &
32 & 5^2 & ( 77,37 ) & 7,407 & 221 \\ 
\hline 
\rule{0pt}{2.3ex} 
32 & 11^2 & (15,178 ) & 15,89 & 628 & 
32 & 19^2 & (33,62 ) & 11,93 & 1278\\
\hline 
\rule{0pt}{2.3ex} 
33 & 5^2 & ( 57,31 ) & 1767 & 236  &
33 & 13^2 & (1055,2) & 422,10 & 773\\ 
\hline 
\rule{0pt}{2.3ex} 
33 & 17^2 & (1055,2 ) & 10,422 & 1103  & 
34 & 3^2 & (871,2) & 2,871 & 121\\
\hline 
\rule{0pt}{2.3ex} 
34 & 13^2 & (10,297) & 22,297 & 810  & 
34 & 19^2 & (134,9 ) & 67 & 1470\\ 
\hline 
\rule{0pt}{2.3ex} 
36 & 5^2 & (1343 , 1) & 1343 & 250 & 
36 & 11^2 & (142, 9) & 142 & 698 \\
\hline 
\rule{0pt}{2.3ex} 
37 & 5^2 & ( 327, 4) & 327 & 264 & 
37 & 13^2 & (145,23) & 5,667 & 846 \\
\hline 
\rule{0pt}{2.3ex} 
38 & 2^2 & (15,611) & 3,13,235 & 65 &
38 & 5^2 & (3143,1) & 7,449 & 257 \\ 
\hline 
\rule{0pt}{2.3ex} 
39 & 2^2 & (65,31) & 2015 & 68 & 
39 & 5^2 & (3927,2) & 2,231 ,17 & 266 \\
\hline 
\rule{0pt}{2.3ex} 
40 & 5^2 & ( 287, 4) & 287 & 282 & 
40 & 11^2 & (91,20 ) & 13,35 & 762\\
\hline 
\rule{0pt}{2.3ex} 
40 & 19^2 & (10,351 ) & 270,351 & 1488 &
41 & 5^2 & ( 143, 6) & 143 & 280 \\
\hline 
\rule{0pt}{2.3ex} 
42 & 3^2 & (58,97) & 2,2813 & 134 &
43 & 2^2 & (15,301) & 5, 903 & 74 \\ 
\hline 
\rule{0pt}{2.3ex} 
43 & 5^2 & ( 69,22 ) & 759 & 284 & 
43 & 19^2 & (21,230) & 15,161,230 & 1524 \\
\hline 
\rule{0pt}{2.3ex} 
44 & 2^2 & (1589,1) & 1589 & 71 &
44 & 3^2 & (185,19) & 5,703 & 140 \\
\hline 
\rule{0pt}{2.3ex} 
45 & 5^2 & ( 623, 2) & 623 & 308 &
45 & 7^2 & (1751,1) & 1751 & 456 \\
\hline 
\rule{0pt}{2.3ex} 
46 & 7^2 & (695,2) & 695 & 472 &
47 & 2^2 & (7215,1) & 5,1443 & 78 \\
\hline 
\rule{0pt}{2.3ex} 
47 & 7^2 & (15,146) & 1095 & 472 &
47 & 19^2 & (21,62) & 93 & 1624 \\
\hline 
\rule{0pt}{2.3ex} 
48 & 5^2 & (1623,2) & 2,1623& 307 & 
48 & 19^2 & (51,52 ) & 39,51 &  1700 \\
\hline 
\rule{0pt}{2.3ex} 
49 & 17^2 & (35,24 ) & 21 & 1488 &
50 & 5^2 & (57,62) & 2,1767 & 330\\
 \hline 
\end{longtable}
}

In Tables \ref{tabMax2}, \ref{tabMax34}, and \ref{tabMax>4}, we list Atkin--Lehner quotients $X_0^D(N)/\langle w_{d_1},\ldots,w_{d_k}\rangle$ with genus $g\ge 2$ achieving the maximal number of points for a certain pair (genus, finite field). 
The curves are grouped by their real Weil polynomial $h_W(x)$ which is displayed as well.

We write in bold curves that belong to classes sharing a common real Weil polynomial that does not occur among those previously encountered in this paper or in \cite[Appendix C, Tables 11, 12, and 13]{DLMS26}. This implies the existence of a previously unknown maximal isomorphism classes.

We next explain the notation of the tables which we use to save space. Let
\[
DN = \prod_i p_i^{v_i} = \prod_i q_i
\]
be the prime factorization of $DN$, with $p_1<p_2< \ldots$ and $q_i = p_i^{v_i}$.
Then, each Hall divisor $d_h$ is uniquely the product of certain $q_{j}$'s and, in the tables, we denote $X_0^D(N)/\langle w_{d_1},\ldots,w_{d_k}\rangle$ by 
\[
(D,N)\, \{ i_{1,1},\ldots, i_{1,m_1};\ldots \ldots \ldots; i_{k,1},\ldots, i_{k,m_k}\},
\]
where
\[
d_h=\prod_{j=1}^{m_h} q_{i_{h,j}},\qquad\text{for }h=1,\ldots, k.
\]

The top curve $X_0^D(N)$ is denoted by $(D,N)\, \{ \}$ in this notation. For a non-trivial example, the entry ``$(10,27)\{2;1,3\}$" in the table below, corresponding to the third curve in the second class of the row $g=2$ and $q=7$, denotes the quotient of the curve $X_0^{10}(27)$ by $\langle w_{27},w_{10}\rangle$, where $d_1=27=p_2^3$ and $d_2=10=p_1 p_3$ since, in this case, $p_1=2$, $p_2=3$, and $p_3=5$. 

Another example is the entry ``$(462,5)\{2;3;1,5;4,5\}$" in the table below, corresponding to the first curve in the row $g=2$ and $q=17^3$, that denotes the quotient of the curve $X_0^{462}(5)$ by $\langle w_3,w_5,w_{22},w_{77}\rangle$, where $d_1=3=p_2$ and $d_2=5=p_3$ and $d_3=22=p_1 p_5$ and $d_4=77=p_4 p_5$ since, in this case, $p_1=2$, $p_2=3$, $p_3=5$, $p_4=7$, and $p_5=11$.

\vspace{1cm}

{\tiny


}

\normalsize

\bibliographystyle{amsalpha}
\bibliography{biblio}
\end{document}